\newtheorem{theorem}{Theorem}
\newtheorem{proposition}{Proposition}
\newtheorem{corollary}{Corollary}
\newtheorem{lemma}{Lemma}
\theoremstyle{definition}
\newtheorem{definition}{Definition}
\theoremstyle{remark}
\newtheorem{remark}{Remark}
\theoremstyle{remark}
\newtheorem{example}{Example}
\def\L{\mathbb{L}}
\def\E{\mathbb{E}}
\def\P{\mathbb{P}}
\def\R{\mathbb{R}}
\def\N{\mathbb{N}}
\def\1{\mathbbm{1}}
\def\d{\partial}
\def\Z{\mathbb{Z}}
\def\Lip{\text{Lip}}
\def\cE{{\cal E}}
\def\cL{\mathcal{L}}
\def\cD{\mathcal{D}}
\def\cC{\mathcal{C}}
\def\cM{\mathcal{M}}
\def\cB{\mathcal{B}}
\def\cW{\mathcal{W}}
\def\Var{\text{Var}}
\def\cP{\mathcal{P}}
\def\Hess{\text{Hess}}
\def\Id{\text{Id}}
\title{Convergence to quasi-stationarity through Poincaré inequalities and Bakry-\'{E}mery criteria}
\author{William Oçafrain$^{1}$}
\date{\today}
\begin{document}

\footnotetext[1]{Institut de Mathématiques, Université de Neuchâtel, Rue Emile-Argand, Neuchâtel,
Suisse-2000 \\
  E-mail: william.ocafrain@hotmail.fr}

\maketitle

\begin{abstract}
This paper aims to provide some tools coming from functional inequalities to deal with quasi-stationarity for absorbed Markov processes. First, it is shown how a Poincaré inequality related to a suitable Doob transform entails exponential convergence of conditioned distributions to a quasi-stationary distribution in total variation and in $1$-Wasserstein distance. A special attention is paid to multi-dimensional diffusion processes, for which the aforementioned Poincaré inequality is implied by an easier-to-check Bakry-\'{E}mery condition depending on the right eigenvector for the sub-Markovian generator, which is not always known. Under additional assumptions on the potential, it is possible to bypass this lack of knowledge showing that exponential quasi-ergodicity is entailed by the classical Bakry-\'{E}mery condition.  
\end{abstract} 

\textit{ Key words :}  Absorbed Markov processes; quasi-stationary distribution; Poincaré inequality; Bakry-\'{E}mery condition; $1$-Wasserstein distance; multi-dimensional diffusion processes.
\bigskip

\textit{ 2010 Mathematics Subject Classification. 60B10; 60F99; 60J25; 60J50; 39B62; 60J60.} 
\bigskip

\section*{Notation}
    For a general metric space $(F,d)$:
    \begin{itemize}
    \item $\cM_1(F)$ : Set of the probability measures defined on $F$.
 \item $\cP_p(F)$ : Set of the probability measures defined on $F$ such that 
$$\int_F d(x_0,x)^p \mu(dx) < + \infty,$$
where $x_0 \in F$ is arbitrary.
    \item $\cB(F)$ : Set of the measurable bounded functions defined on $F$.
    \item $\cB_1(F)$ : Set of the measurable bounded functions defined on $F$ such that $||f||_\infty \leq 1$.
    \item $\L^2(\mu)$ : Set of the functions such that $\int_F |f|^2 d\mu < + \infty$, endowed with the norm
    $$\| \cdot \|_{\L^2(\mu)} : f \mapsto \sqrt{\int_F |f|^2 d\mu}.$$
    \item For any $\mu \in \cM_1(F)$ and $f \in \cB(F)$, 
    $$\mu(f) := \int_F f(x) \mu(dx).$$
    \item For two probability measures $\mu$ and $\nu$, the notation $\mu \ll \nu$ means that there exists a density function $f$ such that
    $$\mu(\cdot) = \int_\cdot f(x) \nu(dx),$$
    and this density function will be denoted by $\frac{d\mu}{d \nu}$. 
\item For any positive measure $\mu$ and any measurable function $f$ such that $\mu(f) < + \infty$, denote $f \circ \mu$ the probability measure defined by
\begin{equation}
    \label{notation}
    f \circ \mu (dx) := \frac{f(x)\mu(dx)}{\mu(f)}.
\end{equation}
\end{itemize}

\section{Introduction}

Consider a time-homogeneous Markov process $(X_t)_{t \geq 0}$ defined on a metric state space $(E \cup \{\d\},d)$, where the element $\d \not \in E$ is a \textit{cemetery point} for the process $X$, which means that 
$$X_t = \d,~~~~\forall t \geq \tau_\d,$$
where $\tau_\d := \inf\{t \geq 0 : X_t = \d\}$ is the hitting time of $\d$.
We associate to the process $(X_t)_{t \geq 0}$ a family of probability measures $(\P_x)_{x \in E}$ such that, for any $x \in E$, $\P_x(X_0 = x) = 1$. For any $\mu \in \cM_1(E \cup \{\d\})$, denote $\P_\mu := \int_E \P_x \mu(dx)$. Then, under $\P_\mu$, the law of $X_0$ is $\mu$. Finally, the expectations $\E_x$ and $\E_\mu$ are  respectively associated to $\P_x$ and $\P_\mu$. Moreover, assume that, for any $x \in E$,
$$\P_x[\tau_\d < + \infty] = 1,~~~~\text{and}~~~~\P_x[\tau_\d > t] > 0,~~\forall t \geq 0.$$

A natural notion to study considering absorbed Markov processes is the notion of \textit{quasi-stationarity}, dealing with the weak convergence of the probability measures $$\P_\mu(X_t \in \cdot | \tau_\d > t)$$ when $t$ goes to infinity. It is well-known that, if such a convergence holds for a given initial law $\mu$, then the limiting probability measure $\alpha$ satisfies 
$$\P_\alpha(X_t \in \cdot | \tau_\d > t) = \alpha,~~~~\forall t \geq 0.$$
Such a probability measure is called a \textit{quasi-stationary distribution} and can be understood as an invariant measure for the semi-flow $(\phi_t)_{t \geq 0}$ defined by
$$\begin{array}{ccccc}
     \phi_t & : & \cM_1(E) & \to & \cM_1(E)  \\
     & & \mu & \mapsto & \P_\mu(X_t \in \cdot | \tau_\d > t),
\end{array}~~~~\forall t \geq 0.$$ 
For a general overview on this theory, we refer the reader to \cite{CMSM,MV2012,vDP2013}, where it is shown that, defining the sub-Markovian semi-group $(P_t)_{t \geq 0}$ as
\begin{equation}
    \label{semi-group}
    P_tf(x) := \E_x(f(X_t)\1_{\tau_\d > t}),~~~~\forall t \geq 0, ~\forall f \in \cB(E), \forall x \in E,
\end{equation}
$\alpha \in \cM_1(E)$ is a quasi-stationary distribution if and only if there exists $\lambda_0 > 0$ such that  
$$\alpha P_t := \P_\alpha(X_t \in \cdot, \tau_\d > t) = e^{-\lambda_0 t} \alpha,~~~~\forall t \geq 0.$$
In other words, quasi-stationary distributions are left eigenvectors for the operators $P_t$, associated to the eigenvalues $e^{-\lambda_0 t}$.
Hence, quasi-stationarity can be dealt with through spectral methods, and existence and uniqueness of quasi-stationary distributions has been shown by this way for several processes, such as discrete-time Markov chains \cite{Darroch1965,SVJ1966}, birth-death processes \cite{cavender1978,KS1991,vD1991} and diffusion processes \cite{CCLMMSM2009,KS2012,littin2012,MSM1994,SE2007}. 

More recently, other methods were developed in order to study quasi-stationarity. These methods aim to obtain exponential convergence towards quasi-stationary distributions for some processes and are based on well-known probabilistic tools coming from the framework without absorption, such as Doeblin's condition or Lyapunov functions (see \cite{MT2012} for an overview on these tools). In particular, in \cite{CV2014},  necessary and sufficient conditions for the uniform-in-law exponential convergence in total variation are provided, where we recall that the \textit{total variation distance} of two probability measures $\mu,\nu$ is defined by
$$\|\mu - \nu \|_{TV} := \sup_{f \in \cB_1(E)} |\mu(f) - \nu(f)|.$$
Since, other papers showed exponential convergences in total variation under weaker assumptions, allowing convergences in total variation holding non-uniformly in the initial measure. In particular, we refer the reader to \cite{CV2017c,V2018} for the study of absorbed Markov processes, and \cite{bansaye2019non,champagnat2019practical,ferre2020more} for the study of general renormalized Feynman-Kac semi-groups. 

For non-absorbed Markov processes, the rate of convergence towards invariant measures can also be studied through functional inequalities, such as Poincaré inequalities. 
A probability measure $\pi$ is said to satisfy a \textit{Poincaré inequality} if there exists a constant $C > 0$ such that, for any $f \in \cD(\cE)$,
\begin{equation}
    \label{ls}
    \Var_\pi(f) \leq - C \int_E f \cL f d\pi,
\end{equation}
where $\Var_\pi(f) := \int_E (f - \pi(f))^2 d\pi$, $\cL$ is a generator which cancels $\pi$, and $\cD(\cE)$ is the set of the measurable functions such that
$$\cE(f,f) := - \int_E f \cL f d \pi$$
is well-defined. We refer the reader to \cite{bakry2013analysis,royer2007initiation} to go further about Poincaré inequalities.   

The inequality \eqref{ls} is actually equivalent to the exponential decay of the \textit{$\chi_2$-divergence} between the semi-group $\mu e^{t \cL}$ and $\pi$, the $\chi_2$-divergence being defined as follows :
$$\chi_2(\mu | \nu) := \left\{ \begin{array}{cc}
     \sqrt{\int_E \left(\frac{d\mu}{d\nu}-1\right)^2 d\nu} & \text{ if } \mu \ll \nu  \\
     + \infty & \text{ otherwise.}
\end{array}
\right.$$
In particular, this implies an exponential decay of the total variation distance between $\mu e^{t \cL}$ and $\pi$ when the quantity $\chi_2(\mu | \pi)$ is finite. 

In the literature, some papers dealing with the use of Poincaré inequalities for quasi-stationarity have been already written, in particular for Markov processes living on discrete state spaces (\cite{CCM2016,diaconis2019analytic,DM2015}). However, the proofs provided by these papers strongly rely on the discrete aspect of the state space, and are therefore hardly applicable for processes living on continuous state space, such as diffusions processes. Our aim will be therefore to show how to use such inequalities to get exponential convergence towards quasi-stationarity for such processes. In particular, the convergence in total variation will be studied, as well as the convergence in \textit{$1$-Wasserstein distance}, which is defined as 
$$\cW_1(\mu, \nu) := \inf_{(X,Y) \in \Pi(\mu,\nu)} \E[d(X,Y)],~~~~\forall \mu, \nu \in \cP_1(E),$$
where $\Pi(\mu,\nu)$ is the set of all the couplings $(X,Y)$ such that the law of $X$ (respectively $Y$) is $\mu$ (respectively $\nu$). We refer to Theorem \ref{thm-poin} in Section \ref{section-poin} for the general statement and Corollary \ref{cor1} for the convergence in $1$-Wasserstein distance.

In the third and last section, we will be more particularly interested in quasi-stationarity for diffusion processes $(X_t)_{t \geq 0}$ living on a domain $D \subset \R^d$, absorbed at the boundary $\d D$, and satisfying on $D$ the stochastic differential equation
\begin{equation}
    \label{sde}
    dX_t = \sqrt{2} dB_t - \nabla V(X_t)dt,~~~~X_t \in D,
\end{equation}
where $(B_t)_{t \geq 0}$ is a $d$-dimensional Brownian motion and $V$ is a $\cC^2$-function on $\R^d$. In the non-absorbed framework, it is well-known that the reversible probability measure $$\gamma(dx) := Z^{-1} e^{-V(x)}dx$$ ($Z$ is the renormalization constant) satisfies a Poincaré inequality when the condition
\begin{equation}
    \label{be}
    \Hess ~ V \geq \kappa \Id
\end{equation}
is satisfied for a given $\kappa > 0$. This last result is a consequence from the one shown by Bakry and \'{E}mery in \cite{bakry1985diffusions} and the condition \eqref{be} is usually called the \textit{Bakry-\'{E}mery condition} or \textit{curvature-dimension condition}. In particular, under \eqref{be}, the diffusion process $(X_t)_{t \geq 0}$ converges towards $\gamma$  in total variation and in $1$-Wasserstein distance.
Our goal is therefore to recover this property of convergence in the quasi-stationary framework through a condition similar to \eqref{be}. More precisely, the following result is obtained in Section \ref{section-be} :
\begin{theorem}
\label{premiere}
\begin{itemize}
    \item Assume that there exists $\eta$ positive on $D$, vanishing on $\d D$ such that $\gamma(\eta^2) < + \infty$ and there exists $\lambda_0 > 0$ such that
    $$\Delta \eta(x) - \nabla V(x) \cdot \nabla f(x) = - \lambda_0 \eta(x),~~~~\forall x \in D,$$
    \item and assume that there exists $\kappa > 0$ such that 
    $$\Hess [V - 2 \log(\eta)] \geq \kappa \Id.$$
\end{itemize}
Then there exists $C > 0$ such that, for any $\mu \in \cM_1(D)$ and $t \geq 0$,
    \begin{equation}
        \label{inequality}
        \| \P_\mu[X_t \in \cdot | \tau_\d > t] - \eta \circ \gamma \|_{TV} \leq C \chi_2(\eta \circ \mu | \eta^2 \circ \gamma) e^{-\kappa t},
    \end{equation}
    where we recall that the notation $f \circ \mu$ is defined previously in \eqref{notation} in Notation.
    
    Moreover, if $\int_D (1+|x|)^2 e^{-V(x)}dx < + \infty$, the inequality $\eqref{inequality}$ holds in $1$-Wasserstein distance for $t$ large enough.
\end{theorem}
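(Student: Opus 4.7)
The strategy is to convert the quasi-stationary convergence problem into an ordinary ergodicity problem via Doob's $h$-transform with $h=\eta$, and then combine the classical Bakry-\'{E}mery theorem (applied to the transformed process) with Theorem \ref{thm-poin}, whose content is a general ``Poincaré implies exponential convergence to quasi-stationarity'' result.

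First I compute the generator of the $h$-transformed process. The sub-Markovian generator associated to \eqref{sde} on $D$ is $\cL = \tfrac12\Delta - \tfrac12 \nabla V\cdot\nabla$ with Dirichlet condition on $\d D$, and by hypothesis $\cL\eta = -\lambda_0\eta$. Expanding $\cL(\eta f)$ by the Leibniz rule and using the eigenvalue relation to cancel the zeroth-order terms yields
$$\widetilde{\cL}f \;:=\; \frac{1}{\eta}\cL(\eta f) + \lambda_0 f \;=\; \tfrac12 \Delta f - \tfrac12 \nabla \widetilde V \cdot \nabla f, \qquad \widetilde V := V - 2\log\eta.$$
Thus the Doob-transformed process $\tX$ is itself a drift-diffusion on $D$; it is non-absorbed because the additional drift $+\nabla\log\eta$ is repulsive near $\d D$ (where $\eta$ vanishes), and it is reversible with respect to the probability measure $\eta^2 * \gamma$, whose density is proportional to $\eta^2 e^{-V}$.

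Second, the condition $\Hess\widetilde V\geq\kappa\,\Id$ is precisely the Bakry-\'{E}mery curvature-dimension criterion $CD(\kappa,\infty)$ for $\tX$. By \cite{bakry1985diffusions}, this implies that $\eta^2*\gamma$ satisfies a Poincaré inequality with constant $1/\kappa$. This is exactly the Poincaré hypothesis needed to invoke Theorem \ref{thm-poin}, which (applied with the eigenvector $\eta$ and reference measure $\eta^2*\gamma$) directly yields the total-variation estimate \eqref{inequality} with rate $e^{-\kappa t}$ and the $\chi_2$-initial-divergence factor. For the Wasserstein conclusion, the additional integrability $\int_D(1+|x|)^2 e^{-V(x)}dx<+\infty$ gives first-moment control on both $\eta^2 * \gamma$ and, for $t$ large enough, on $\P_\mu[X_t \in \cdot \mid \tau_\d > t]$; this is the setting of Corollary \ref{cor1}, which interpolates the TV bound against tail bounds to upgrade the convergence to $\cW_1$ at the same rate.

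The main obstacle is a careful justification that the $h$-transform is legitimate in this setting: $\widetilde V = V - 2\log\eta$ is singular at $\d D$, so one has to check that $\tX$ is non-explosive on $D$, that the associated symmetric Dirichlet form admits a suitable core so the classical Bakry-\'{E}mery conclusion transfers (in particular, delivering a Poincaré inequality for $\eta^2*\gamma$ rather than for $\gamma$ itself), and that the abstract hypotheses of Theorem \ref{thm-poin} are genuinely met for this choice of $\eta$. Once these analytic technicalities are handled, the proof is an essentially mechanical translation of Theorem \ref{thm-poin} and Corollary \ref{cor1} through the $h$-transform.
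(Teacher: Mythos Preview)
Your proposal is correct and follows essentially the same route as the paper: compute the Doob-transformed generator $\tilde\cL=\tfrac12\Delta-\tfrac12\nabla(V-2\log\eta)\cdot\nabla$, read the hypothesis $\Hess[V-2\log\eta]\ge\kappa\Id$ as the Bakry--\'Emery condition for $\tilde\cL$ (hence a Poincar\'e inequality for $\beta=\eta^2*\gamma$ with constant $1/\kappa$), and then feed this into Theorem~\ref{thm-poin} and Corollary~\ref{cor1}. Two small points where the paper is more explicit than your sketch: it verifies (P1) by using the reversibility of $\gamma$ for $(P_t)_{t\ge0}$ to show directly that $\alpha:=\eta*\gamma$ is a quasi-stationary distribution, and it checks (P3) by computing $\alpha(\psi^2/\eta)=\gamma(\psi^2)\,/\,\gamma(\eta)$ for $\psi=1$ (TV case) and $\psi(x)=1+d(x,x_0)$ (Wasserstein case); your description of Corollary~\ref{cor1} as ``interpolating the TV bound against tail bounds'' is not quite what happens---it is a direct application of Theorem~\ref{thm-poin} with the larger weight $\psi$, using Kantorovich--Rubinstein duality.
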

A more specific study will focus on multi-dimensional diffusion processes living on $D = (0,+\infty)^d$ and absorbed when one component is $0$. In this particular case, and assuming moreover that $V$ can be expressed as 
$$V(x_1, \ldots, x_d) = \sum_{i=1}^d V_i(x_i),~~~~\forall ~(x_1, \ldots, x_d) \in D,$$
where, for all $i$, $V_i$ are $\cC^2$-functions, one has the following result :  
\begin{theorem}
\label{last}
If 
$$\Hess~V \geq \kappa \Id,$$
then there exists a quasi-stationary distribution $\alpha = \alpha_1 \otimes \cdots \otimes \alpha_d \in \cM_1(D)$ and $C_d > 0$ (depending on the dimension $d$) such that, for any $\mu \in \cM_1(D)$ and $t$ large enough,
$$\cW_1(\P_\mu[X_t \in \cdot | \tau_\d > t], \alpha) \leq C_d \chi_2(\eta \circ \mu | \eta \circ \alpha) e^{-\kappa t},$$
where $\eta := \frac{d \alpha}{d \gamma}$.
If moreover $\mu = \mu_1 \otimes \cdots \otimes \mu_d$, there exists a constant $C > 0$, which does not depend on $d$, such that, for $t$ large enough,
$$\cW_1(\P_\mu[X_t \in \cdot | \tau_\d > t], \alpha) \leq C \left[ \sum_{i=1}^d \chi_2(\eta_i \circ \mu_i | \eta_i \circ \alpha_i)\right] e^{-\kappa t},$$
where $\eta_i := \frac{d\alpha_i}{d\gamma_i}$. 
\end{theorem}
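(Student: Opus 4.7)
The plan is to reduce the multi-dimensional statement to $d$ independent scalar problems by exploiting the separability of $V$, and then to apply Theorem \ref{premiere} either globally or factor-by-factor.

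First, the product form of $V = \sum_i V_i$ and of the domain $D = (0,+\infty)^d$ decouple the SDE \eqref{sde} into $d$ independent one-dimensional diffusions $dX_t^i = dB_t^i - \tfrac{1}{2} V_i'(X_t^i) dt$, each absorbed at $0$. Since $\Hess V \geq \kappa \Id$ forces $V_i'' \geq \kappa$ for every $i$, each scalar problem is a drift-diffusion on the half-line with a $\kappa$-uniformly convex confining potential; it therefore admits a quasi-stationary distribution $\alpha_i$ with positive Dirichlet eigenfunction $\eta_i$ satisfying $P_t^i \eta_i = e^{-\lambda_i t}\eta_i$ and $\gamma_i(\eta_i^2)<\infty$. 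By independence the multidimensional QSD is $\alpha = \alpha_1 \otimes \cdots \otimes \alpha_d$, with eigenfunction $\eta = \prod_{i} \eta_i$, eigenvalue $\lambda_0 = \sum_i \lambda_i$ and $\gamma(\eta^2) = \prod_i \gamma_i(\eta_i^2) < \infty$, so the first hypothesis of Theorem \ref{premiere} is verified. The moment condition $\int_D (1+|x|)^2 e^{-V(x)} dx < \infty$ follows from the quadratic growth of each $V_i$.

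The heart of the matter is checking the Bakry-\'{E}mery condition for the Doob-transformed potential $V - 2\log\eta$. Because $\log \eta = \sum_i \log \eta_i$ separates, the Hessian is diagonal,
$$\Hess[V - 2\log \eta] = \mathrm{diag}\bigl(V_i'' - 2(\log \eta_i)''\bigr),$$
so, given $V_i'' \geq \kappa$, it suffices to prove that every scalar eigenfunction $\eta_i$ is log-concave on $(0,+\infty)$. I expect this to be the main obstacle. A natural attack goes through the eigen-ODE $\eta_i'' - V_i' \eta_i' + 2 \lambda_i \eta_i = 0$, which for $u_i := \log \eta_i$ reads
$$u_i'' + (u_i')^2 - V_i'(x) u_i' + 2\lambda_i = 0,$$
and one argues, using $V_i'' \geq \kappa > 0$ together with the boundary behaviour $u_i(x)\to -\infty$, $u_i'(x)\to +\infty$ as $x \to 0^+$, that $u_i''$ cannot become positive; alternatively one may go through log-concavity in $x$ of $\P_x[\tau_\d^i > t]$ followed by a $t \to \infty$ limit, in the spirit of Brascamp--Lieb for ground states on convex domains with log-concave reference measure. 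Granting log-concavity, $\Hess[V - 2\log \eta] \geq \kappa \Id$.

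Both hypotheses of Theorem \ref{premiere} being in place, the first inequality follows by applying it in dimension $d$: using the identity $\eta * \alpha = \eta^2 * \gamma$ (an immediate consequence of $\alpha = \eta * \gamma$), one obtains
$$\cW_1(\P_\mu[X_t \in \cdot \mid \tau_\d > t], \alpha) \leq C_d \, \chi_2(\eta * \mu \mid \eta * \alpha) e^{-\kappa t}$$
for $t$ large enough, the constant $C_d$ coming from the $\cW_1$ part of Theorem \ref{premiere} in dimension $d$. For the dimension-free bound under product initial data $\mu = \mu_1 \otimes \cdots \otimes \mu_d$, I would use that $\{\tau_\d > t\} = \bigcap_i \{\tau_\d^i > t\}$ together with independence to factorise
$$\P_\mu[X_t \in \cdot \mid \tau_\d > t] = \bigotimes_{i=1}^d \P_{\mu_i}[X_t^i \in \cdot \mid \tau_\d^i > t].$$
Equipping $\R^d$ with the $\ell^1$ distance, independent couplings of the factors give the tensorization $\cW_1\bigl(\bigotimes_i \nu_i, \bigotimes_i \nu_i'\bigr) \leq \sum_i \cW_1(\nu_i, \nu_i')$, and the scalar version of Theorem \ref{premiere} applied to each factor, summed over $i$, yields the second inequality with a constant $C$ coming from the scalar estimate, hence independent of $d$.
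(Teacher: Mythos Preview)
Your overall plan coincides with the paper's: exploit separability to reduce to the scalar problem, identify log-concavity of each $\eta_i$ as the key to the Bakry--\'Emery condition on $V-2\log\eta$, invoke Theorem~\ref{premiere} in dimension $d$ for the first bound, and, for product initial data, factorise the conditional law and tensorise $\cW_1$ over the $\ell^1$-metric. This is exactly what is done in Corollary~\ref{corollary} and Theorems~\ref{thmintro}--\ref{dernier}.

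The only substantive divergence is in the proof of log-concavity of $\eta_i$, which you leave as a sketch. Your ODE route does not close as easily as suggested: from $u'' = -(u')^2 + V' u' - 2\lambda$, a first-zero argument at the smallest $x_0$ with $u''(x_0)=0$ gives $u'''(x_0) = V''(x_0)\,u'(x_0)$, but chasing signs does not by itself produce a contradiction (both roots of the resulting quadratic in $u'(x_0)$ can be positive). Your second route, via log-concavity in $x$ of $\P_x[\tau_\d>t]$ and a Brascamp--Lieb-type limiting argument, is viable in principle but needs a precise result for Dirichlet ground states on the half-line with log-concave reference measure. The paper (Proposition~\ref{lemma}) takes instead a probabilistic comparison route: the strong Markov property at $\tau_x$ gives, for small $h>0$,
\[
\eta(x+h)\;=\;\E_{x+h}\!\bigl[e^{\lambda_0\tau_x}\bigr]\,\eta(x),
\qquad\text{hence}\qquad
(\log\eta)'(x)\;=\;\lim_{h\downarrow 0}\frac{\E_{x+h}\bigl[e^{\lambda_0\tau_x}\bigr]-1}{h}.
\]
Since $V'$ is non-decreasing, a pathwise comparison (Ikeda--Watanabe) of the shifted processes shows that $x\mapsto \E_{x+h}\bigl[e^{\lambda_0\tau_x}\bigr]$ is non-increasing, whence $(\log\eta)'$ is non-increasing. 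This argument uses only convexity of $V$ (not the quantitative bound $\kappa$) and sidesteps the analytic difficulties you anticipate.
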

This theorem is further referenced as Theorem 6 in Subsection 3.3.3.
A particular attention will be paid on processes coming down from infinity, for which it will be shown that the rate of convergence $\kappa$ provided by the Bakry-\'{E}mery condition \eqref{be} can actually be bettered (see Theorems \ref{thm-cdfi} and \ref{cdfimulti}). 

\section{Exponential convergence to quasi-stationarity through a Poincaré inequality}
\label{section-poin}

\subsection{Main result}

Let $(X_t)_{t \geq 0}$ be a Markov process absorbed at a cemetery point $\d$, and let $(P_t)_{t \geq 0}$ be the sub-Markovian semi-group defined in \eqref{semi-group}. Denote by 
\begin{equation} \label{domain-of-definition} \cD(\cL) := \left\{f : \lim_{t \to 0} \frac{P_tf(x) - f(x) }{t} \text{ exists for any } x\right\},\end{equation}
and define the generator $\cL$ as
 \begin{equation} \label{generator}\cL f(x) := \lim_{t \to 0} \frac{P_tf(x) - f(x) }{t},~~~~\forall x \in E, \forall f \in \cD(\cL).\end{equation}
Now, let us state the following theorem : 

\begin{theorem}
\label{thm-poin}
Assume that
\begin{enumerate}[({P}1)]
    \item there exists a quasi-stationary distribution $\alpha \in \cM_1(E)$ satisfying 
    $$\alpha P_t = e^{-\lambda_0 t} \alpha,~~~~\forall t \geq 0,$$
    with $\lambda_0 > 0$, and an eigenfunction $\eta$ positive on $E$ such that $\alpha(\eta) = 1$ and
    $$P_t \eta(x) = e^{-\lambda_0 t} \eta(x),~~~~\forall x \in E, \forall t \geq 0;$$
    \item there exists $C_P \in (0,+\infty)$ such that
    \begin{equation}
    \label{poincare}
        \Var_{\eta \circ \alpha}(f) \leq - C_P \left[\int_E f (\lambda_0 f \eta + \cL(f \eta)) d \alpha \right],
    \end{equation}
    for any measurable function $f$ such that $\lambda_0 \int_E f^2 d(\eta \circ \alpha) +  \int_E f \cL(f \eta) d \alpha$ is well-defined;
\item and there exists a function $\psi : E \to [1,+ \infty)$ such that $$\alpha(\psi) < + \infty, ~~~~\text{ and }~~~~ \alpha(\psi^2/\eta) < + \infty.$$ 
\end{enumerate}
Then, for any $\mu \in \cM_1(E)$, there exists $t_\mu$ such that, for any $t \geq t_\mu$,
\begin{equation}
\label{expo-conv}
\sup_{|f| \leq \psi} \left| \E_\mu[f(X_t) | \tau_\d > t] - \alpha(f) \right| \leq C_\psi \chi_2(\eta \circ \mu | \eta \circ \alpha) e^{-\frac{t}{C_P}},\end{equation}
where 
$$C_\psi = (a+b \alpha(\psi)) \alpha(\psi^2/\eta)^{1/2},$$
with some positive constants $a,b$. 
\end{theorem}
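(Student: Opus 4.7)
The plan is to reduce the quasi-stationary convergence to the $\L^2$-exponential decay of a genuine Markov semigroup obtained by $\eta$-Doob transform, for which hypothesis (P2) is exactly a Poincaré inequality, and then to translate this decay back through a ratio estimate.

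First I would set $\tilde P_t f := e^{\lambda_0 t} P_t(f\eta)/\eta$. Assumption (P1) immediately gives $\tilde P_t 1 = 1$ and the invariance of $\eta*\alpha$ under $\tilde P_t$. A direct computation using $\cL\eta = -\lambda_0\eta$ together with the carré du champ identity $\cL(f\eta) = f\cL\eta + \eta\cL f + \Gamma(f,\eta)$ yields the generator
\[
\tilde\cL f = \cL f + \frac{1}{\eta}\Gamma(f,\eta),
\]
so that the Dirichlet form associated with $\tilde P_t$ on $\L^2(\eta*\alpha)$ is
\[
-\int_E f\, \tilde\cL f \, d(\eta*\alpha) = -\int_E f \cL f\, d(\eta*\alpha) - \int_E f \Gamma(f,\eta)\, d\alpha,
\]
identifying (P2) as a Poincaré inequality for $\tilde P_t$ with constant $C_P$. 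Differentiating $t\mapsto \|\tilde P_t g - (\eta*\alpha)(g)\|^2_{\L^2(\eta*\alpha)}$ and applying (P2) then gives the standard contraction $\|\tilde P_t g - (\eta*\alpha)(g)\|_{\L^2(\eta*\alpha)} \le e^{-t/C_P}\|g - (\eta*\alpha)(g)\|_{\L^2(\eta*\alpha)}$, which holds without any reversibility assumption.

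Next, I would rewrite the conditioned expectation in terms of $\tilde P_t$. Using $P_t h = e^{-\lambda_0 t}\eta\, \tilde P_t(h/\eta)$, the normalising factor $e^{-\lambda_0 t}\mu(\eta)$ cancels and
\[
\E_\mu[f(X_t) | \tau_\d > t] = \frac{(\eta*\mu)(\tilde P_t(f/\eta))}{(\eta*\mu)(\tilde P_t(1/\eta))}.
\]
Applying Cauchy--Schwarz with density $d(\eta*\mu)/d(\eta*\alpha)$ and using invariance of $\eta*\alpha$ under $\tilde P_t$, each quantity of the form $|(\eta*\mu)(\tilde P_t g) - (\eta*\alpha)(g)|$ is at most $\chi_2(\eta*\mu | \eta*\alpha)\,\|g\|_{\L^2(\eta*\alpha)}\,e^{-t/C_P}$. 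By (P3), both $\|f/\eta\|^2_{\L^2(\eta*\alpha)} \le \alpha(\psi^2/\eta)$ (whenever $|f|\le\psi$) and $\|1/\eta\|^2_{\L^2(\eta*\alpha)} \le \alpha(\psi^2/\eta)$ (since $\psi\ge 1$), so numerator and denominator are simultaneously controlled by $\chi_2(\eta*\mu | \eta*\alpha)\,\alpha(\psi^2/\eta)^{1/2} e^{-t/C_P}$.

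Finally, choosing $t_\mu$ large enough so that the denominator $(\eta*\mu)(\tilde P_t(1/\eta))$ stays above, say, $1/2$, the elementary inequality $|N/D-A| \le 2(|N-A|+|A|\,|D-1|)$ applied with $A=\alpha(f)$ and $|A|\le \alpha(\psi)$ combines both bounds into \eqref{expo-conv} with $C_\psi = 2(1+\alpha(\psi))\alpha(\psi^2/\eta)^{1/2}$, matching the announced form. The main delicate point I anticipate is the clean identification of (P2) with the Poincaré inequality of the Doob-transformed semigroup, in particular the domain issues ensuring that $f/\eta$ and $1/\eta$ belong to the class of admissible test functions in (P2); once that is settled, the rest reduces to routine $\L^2$ spectral-type arguments and the ratio manipulation above.
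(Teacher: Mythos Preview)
Your proposal is correct and follows essentially the same route as the paper: introduce the $\eta$-Doob transform $\tilde P_t$, recognise (P2) as the Poincar\'e inequality for $\tilde P_t$ with invariant measure $\eta*\alpha$, rewrite $\E_\mu[f(X_t)\mid\tau_\partial>t]$ as the ratio $(\eta*\mu)\tilde P_t(f/\eta)\,/\,(\eta*\mu)\tilde P_t(1/\eta)$, and control numerator and denominator via the $\L^2$ contraction combined with Cauchy--Schwarz against $\chi_2(\eta*\mu\mid\eta*\alpha)$. The only cosmetic differences are that the paper phrases the spectral gap on the measure side (decay of $\chi_2(\nu\tilde P_t\mid\beta)$) rather than on the function side, and organises the ratio estimate as a two-step argument (first assume $\alpha(\psi^2/\eta)\chi_2^2<0.9$, then reduce to this case via the identity $\eta*\phi_t(\mu)=(\eta*\mu)\tilde P_t$), whereas your single inequality $|N/D-A|\le 2(|N-A|+|A|\,|D-1|)$ once $D\ge 1/2$ is a bit more direct and yields the same constant shape.
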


Before proving this theorem, let us do some important remarks:

\begin{remark}
\label{remark}
(P1) is satisfied under the Lyapunov conditions (E) and (F) presented in \cite{CV2017c}, or under the conditional Doeblin's conditions (A) in \cite{CV2014}. In particular, the results presented further in Subsection \ref{dif}, dealing with quasi-stationarity for processes following stochastic differential equations like
$$dX_t = \sqrt{2} dB_t - \nabla V(X_t)dt,$$
rely a lot on these assumptions, allowing to state easy-to-check criteria only based on the potential $V$ (such as \eqref{c} in Remark \ref{r}). For other criteria related to other types of processes (such as diffusion processes with multiplicative noise, birth-and-death processes, Markov chains,...), we refer the reader to \cite{CV2017c}.

It is possible to state even less restrictive assumptions than \cite[Assumptions (F)]{CV2017c}, also based on Lyapunov functions, entailing the existence of a quasi-stationary distribution (see for example \cite[Theorem 4.2.]{collet2009quasistationary} or \cite[Theorem 7]{BCOV2021}), but without ensuring the existence of an eigenfunction $\eta$. To obtain such an eigenfunction, it is quite usual to use Krein-Rutman's theorem, once the compacity of the operators $(P_t)_{t \geq 0}$ or related operators is known. Finally, note that the existence of $\alpha$ and $\eta$ does not ensure in general that $\alpha(\eta) < + \infty$. For example, the one-dimensional Brownian motion with drift $X_t = B_t - r t$ ($r > 0$) absorbed at $0$ admits $\alpha(dx) = r^2 x e^{-rx}dx$ and $\eta(x) = x e^{rx}$, so that $\alpha(\eta) = + \infty$.
\end{remark}   

\begin{remark}
\label{petcou}
It is also important to note that the Poincaré constant $C_P$ fundamentally depends on the survival state space $E$. We refer the reader to the subsection \ref{ex}, in particular the example of a Brownian motion living in the hypercube $C_N : (-N,N)^d \subset \R^d$, for which the Poincaré constant increases as $N^2$ when $N$ increases.
\end{remark}   

\begin{remark}
\label{domain-of-attraction}
For several processes, it is quite usual to have  $\chi_2(\eta \circ \mu | \eta \circ \alpha) = + \infty$ when the initial law is a Dirac measure $\delta_x$. In the most of the cases (see for instance the two examples provided in Subsection 3.2), considering a state $x \in E$, there exists a time $t_0 > 0$ such that
$$\chi_2(\eta\circ \phi_{t_0}(\delta_x) | \eta \circ \alpha) < + \infty.$$
Hence, using the property of semi-flow of $(\phi_t)_{t \geq 0}$ (i.e. $\phi_{t+s} = \phi_t \circ \phi_s$ for all $s,t \geq 0$), the previous theorem implies that there exists $t_{\phi_{t_0}(\delta_x)}$ such that, for any $t \geq t_0 + t_{\phi_{t_0}(\delta_x)}$,
\begin{align*}
\sup_{|f| \leq \psi} \left| \E_x[f(X_t) | \tau_\d > t] - \alpha(f) \right| &= \sup_{|f| \leq \psi} \left| \E_{\phi_{t_0}(\delta_x)}[f(X_{t-t_0}) | \tau_\d > t-t_0] - \alpha(f) \right| \\
&\leq C_\psi \chi_2(\eta\circ \phi_{t_0}(\delta_x) | \eta \circ \alpha) e^{-\frac{(t-t_0)}{C_P}}.
\end{align*}
In other terms, the set of all the measures such that there exists $t_0 \geq 0$ such that $\chi_2(\eta\circ \phi_{t_0}(\mu) | \eta \circ \alpha) < + \infty$ is included in the \textit{domain of attraction of $\alpha$}, denoted by $\cD(\alpha)$, that is the set of the initial measures such that the weak convergence $\P_\mu[X_t \in \cdot | \tau_\d > t] \underset{t \to \infty}{\longrightarrow} \alpha$ holds. We refer the reader to Subsection \ref{DoA} for a deepening on the study of domain of attraction. 
\end{remark} 

\begin{remark}
Because of the condition $\psi \geq 1$, the distance $\sup_{|f| \leq \psi} \left| \E_\mu[f(X_t) | \tau_\d > t] - \alpha(f) \right|$ is actually stronger than the total variation distance.
In particular, Theorem \ref{thm-poin} implies that there exists $C > 0$ such that, for any $\mu \in \cM_1(E)$ and $t \geq 0$, 
$$\| \P_\mu[X_t \in \cdot | \tau_\d > t] - \alpha \|_{TV} \leq C \chi_2(\eta \circ \mu | \eta \circ \alpha) e^{-\frac{t}{C_P}}.$$
Hence, this theorem allows to obtain a result analogous to the ones obtained by Champagnat and Villemonais in \cite{CV2014,CV2017c}. However, contrary to their results, the upper bound could be small if the initial measure is close enough to the quasi-stationary distribution $\alpha$ (it is even equal to $0$ for $\mu = \alpha$). Moreover, Theorem \ref{thm-poin} allows to obtain a convergence in $1$-Wasserstein distance, as stated by the following corollary :
\end{remark}
\begin{corollary}
\label{cor1}
If the assumptions $(P3)$ holds for $$\psi : x \mapsto 1 + d(x,x_0)$$ for a given $x_0 \in E$, then, for any $\mu \in \cM_1(E)$, there exists $t_\mu$ such that, for any $t \geq t_\mu$,
$$\cW_1(\P_\mu[X_t \in \cdot | \tau_\d > t],\alpha) \leq C_\psi \chi_2(\eta \circ \mu | \eta \circ \alpha) e^{-\frac{t}{C_P}}.$$

\end{corollary}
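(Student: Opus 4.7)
The plan is to apply the Kantorovich--Rubinstein duality and reduce the Wasserstein bound to the supremum-type bound already provided by Theorem \ref{thm-poin}. Recall that for any two probability measures $\mu_1,\mu_2 \in \cP_1(E)$,
$$\cW_1(\mu_1,\mu_2) = \sup_{f \in \text{Lip}_1(E)} \left| \int_E f\, d\mu_1 - \int_E f\, d\mu_2 \right|,$$
where the supremum runs over all $1$-Lipschitz functions $f: E \to \R$. Since $\mu_1(f) - \mu_2(f)$ is unchanged when $f$ is shifted by a constant, one may restrict the supremum to those $f \in \text{Lip}_1(E)$ satisfying $f(x_0) = 0$. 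For any such $f$ and any $x \in E$, the Lipschitz bound gives
$$|f(x)| = |f(x) - f(x_0)| \leq d(x,x_0) \leq 1 + d(x,x_0) = \psi(x),$$
so every such $f$ lies in the class $\{|f| \leq \psi\}$ appearing in Theorem \ref{thm-poin}.

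Next I would check that the two measures being compared actually belong to $\cP_1(E)$, so that the duality is meaningful. The quasi-stationary distribution $\alpha$ belongs to $\cP_1(E)$ because $\alpha(\psi) < +\infty$ by assumption (P3). For the conditioned distribution, applying Theorem \ref{thm-poin} with this particular choice of $\psi$ yields, for $t \geq t_\mu$,
$$\E_\mu[\psi(X_t) \mid \tau_\d > t] \leq \alpha(\psi) + C_\psi \chi_2(\eta*\mu \mid \eta*\alpha) e^{-t/C_P} < +\infty,$$
so that $\P_\mu[X_t \in \cdot \mid \tau_\d > t] \in \cP_1(E)$ as well.

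Combining these two observations, for $t \geq t_\mu$,
\begin{align*}
\cW_1\bigl(\P_\mu[X_t \in \cdot \mid \tau_\d > t],\alpha\bigr)
&= \sup_{\substack{f \in \text{Lip}_1(E) \\ f(x_0)=0}} \left| \E_\mu[f(X_t) \mid \tau_\d > t] - \alpha(f) \right| \\
&\leq \sup_{|f| \leq \psi} \left| \E_\mu[f(X_t) \mid \tau_\d > t] - \alpha(f) \right| \\
&\leq C_\psi \chi_2(\eta*\mu \mid \eta*\alpha) e^{-t/C_P},
\end{align*}
where the last inequality is precisely \eqref{expo-conv} in Theorem \ref{thm-poin}. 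This would conclude the proof.

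In fact there is no substantial obstacle here: the corollary is essentially a translation of Theorem \ref{thm-poin} through the dual formulation of $\cW_1$, exploiting the fact that the chosen $\psi(x) = 1 + d(x,x_0)$ dominates every $1$-Lipschitz function vanishing at $x_0$. The only mildly delicate point is ensuring that the integrability hypotheses $\alpha(\psi) < +\infty$ and $\alpha(\psi^2/\eta) < +\infty$ in (P3) transfer to the conditioned law at time $t$, which is handled above via the bound coming from Theorem \ref{thm-poin} itself.
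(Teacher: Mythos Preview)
Your proof is correct and follows essentially the same route as the paper's: both invoke the Kantorovich--Rubinstein dual formula, normalize the $1$-Lipschitz test functions at $x_0$ (the paper sets $f(x_0)=1$, you set $f(x_0)=0$, an immaterial difference), observe that such functions satisfy $|f|\leq\psi$, and then apply Theorem~\ref{thm-poin}. Your additional verification that both measures lie in $\cP_1(E)$ is a nice touch the paper omits.
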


\begin{proof}
By the dual formula for the $1$-Wasserstein distance (for example see \cite{V2009}), for any probability measures $\mu$ and $\nu$ in $\cP_1(E)$, one has
\begin{equation}
\label{dual-formula}
    \cW_1(\mu,\nu) = \sup_{f ~1-\Lip} |\mu(f) - \nu(f)| = \sup_{f \in \cC} |\mu(f)-\nu(f)|,
\end{equation}
where 
$$\cC := \{f ~1-\Lip : f(x_0) = 1\}.$$
Thus, any function $f$ belonging to $\cC$ satisfies
$$|f(x)| \leq 1 + d(x,x_0) = \psi(x),~~~~\forall x \in E.$$
Hence, by Theorem \ref{thm-poin}, there exists $t_\mu$ such that, for any $t \geq t_\mu$,
\begin{align*}
    \cW_1(\P_\mu[X_t \in \cdot | \tau_\d > t],\alpha) &\leq \sup_{|f| \leq \psi} \left| \E_\mu[f(X_t) | \tau_\d > t] - \alpha(f) \right|\\ &\leq C_\psi \chi_2(\eta \circ \mu | \eta \circ \alpha) e^{-\frac{t}{C_P}}.
\end{align*}
\end{proof}

Now, let us tackle the proof of Theorem \ref{thm-poin} :

\begin{proof}[Proof of Theorem \ref{thm-poin}] First, remark that if $\chi_2(\eta \circ \mu | \eta \circ \alpha) = + \infty$, the inequality \eqref{expo-conv} is trivially satisfied. So, from now on, we will only consider initial measure such that 
$$\chi_2(\eta \circ \mu | \eta \circ \alpha) < + \infty.$$
The proof is divided into two steps. \\\\
\underline{\textit{First step: When $\alpha(\psi^2/\eta) \chi^2_2(\eta \circ \mu | \eta \circ \alpha) < 0.9$\footnote{2}.}} \footnotetext[1]{The choice of the value 0.9 is totally arbitrary, any value smaller than 1 is suitable for the proof.} \\\\
Let $\mu \in \cM_1(E)$ satisfying $\alpha(\psi^2/\eta) \chi^2_2(\eta \circ \mu | \eta \circ \alpha) < 0.9$. Denote by $(\tilde{P}_t)_{t \geq 0}$ the Markovian semi-group defined by
$$\tilde{P}_tf(x) := e^{\lambda_0 t} \frac{P_t[f \eta](x)}{\eta(x)},$$
where we recall that $\lambda_0$ and $\eta$ are such that, for any $x \in E$ and $t \geq 0$,
$$P_t \eta(x) = e^{- \lambda_0 t} \eta(x).$$
Then, since $\alpha$ is a quasi-stationary distribution for $(X_t)_{t \geq 0}$, the probability measure $\beta(dx) := \eta(x) \alpha(dx)$ is an invariant measure for $(\tilde{P}_t)_{t \geq 0}$. Moreover, denoting by $\tilde{\cL}$ the generator of $(\tilde{P}_t)_{t \geq 0}$, then, for any measurable $f$ such that $f \eta \in \cD(\cL)$ and for any $x \in E$,
$$\tilde{\cL}f(x) = \lambda_0 f(x) + \frac{\cL(f \eta)(x)}{\eta(x)}.$$
This equality comes from the equality $\tilde{\cL} f(x) := \frac{d \tilde{P}_t f(x)}{dt}\big|_{t=0}$.

Then, the Poincaré inequality \eqref{poincare} can be written as follows : 
$$\Var_\beta(f) \leq -C_P \int_E f \tilde{\cL}f d\beta.$$
In other words, the inequality \eqref{poincare} is the Poincaré inequality for the Markovian semi-group $(\tilde{P}_t)_{t \geq 0}$. Then it is well-known that it is equivalent to : for any probability measure $\nu$ on $E$ and $t \geq 0$,
\begin{equation}
    \label{decay}
    \chi^2_2(\nu \tilde{P}_t | \beta) \leq e^{-\frac{2t}{C_P}} \chi^2_2(\nu | \beta).
\end{equation}
Now, let us define, for any $f \in \cB(E)$, $t \geq 0$ and $x \in E$, 
$$Q_t[f](x) := \tilde{P}_t[f/\eta](x) = \frac{e^{\lambda_0 t}}{\eta(x)} P_t[f](x).$$
Since $\alpha(\psi^2/\eta) < \infty$ by the third assumption, one has, for any measurable function such that $|f| \leq \psi$, 
$$||f/\eta||_{\L^2(\beta)} \leq \alpha(\psi^2/\eta) < \infty.$$
In particular,  for any measurable function $f$ such that $|f| \leq \psi$, $t \geq 0$ and $\nu \in \cM_1(E)$ 
$$\left|\nu Q_t f - \alpha(f)\right|^2 = \left[ \nu \tilde{P}_t [f/\eta] - \beta[f/\eta]\right]^2 \leq \alpha(\psi^2/\eta) e^{-\frac{2t}{C_P}} \chi^2_2(\mu | \beta),$$
where the following equality is used : $\forall \nu_1, \nu_2 \in \cM_1(E)$,
$$\chi^2_2(\nu_1 | \nu_2) = \sup_{\|f\|_{\L^2(\nu_2)} \leq 1} \left| \nu_1(f) - \nu_2(f) \right|^2.$$
As a result,  for any $t \geq 0$ and any $\nu \in \cM_1(E)$,
$$\sup_{|f| \leq \psi} |\nu Q_t[f] - \alpha(f)| \leq  [\alpha(\psi^2/\eta)\chi^2_2(\nu | \beta)]^{1/2} e^{- \frac{t}{C_P}}.$$
Now note that, for any $t \geq 0$ and any measurable function $f$, 
\begin{align}
\E_\mu[f(X_t) | \tau_\d > t] &= \frac{\int_E P_t[f](x) \mu(dx)}{\int_E P_t[\1_E](x) \mu(dx)} \notag \\
&=  \frac{\int_E \frac{e^{\lambda_0 t}}{\eta(x)}P_t[f](x) \eta(x) \mu(dx)}{ \int_E \frac{e^{\lambda_0 t}}{\eta(x)} P_t[\1_E](x)  \eta(x)\mu(dx)} \notag \\
&= \frac{\int_E Q_t[f](x) \eta(x) \mu(dx)}{\int_E Q_t[\1_{E}](x) \eta(x)\mu(dx)} \notag \\
&= \frac{(\eta \circ \mu) Q_t [f]}{(\eta \circ \mu) Q_t [\1_{E}]}, \label{ef2}
\end{align}
As a result, since $\alpha(\psi^2/\eta) \chi^2_2(\eta \circ \mu | \beta) < 0.9$, for any $t \geq 0$, 
\begin{equation}
\label{encadrement}
   \frac{\alpha(f) -   [\alpha(\psi^2/\eta)\chi^2_2(\eta \circ \mu | \beta)]^{1/2} e^{-\frac{t}{C_P}}}{1 +  [\alpha(\psi^2/\eta)\chi^2_2(\eta \circ \mu | \beta)]^{1/2} e^{-\frac{t}{C_P}}} \leq \E_\mu(f(X_t)|\tau_\d > t) \leq \frac{\alpha(f) +  [\alpha(\psi^2/\eta)\chi^2_2(\eta \circ \mu | \beta)]^{1/2} e^{-\frac{t}{C_P}}}{1 -  [\alpha(\psi^2/\eta)\chi^2_2(\eta \circ \mu | \beta)]^{1/2} e^{-\frac{t}{C_P}}}.
\end{equation}
For any $t \geq 0$,
\begin{multline*}
\frac{\alpha(f) +  [\alpha(\psi^2/\eta)\chi^2_2(\eta \circ \mu | \beta)]^{1/2} e^{-\frac{t}{C_P}}}{1 -  [\alpha(\psi^2/\eta)\chi^2_2(\eta \circ \mu | \beta)]^{1/2} e^{-\frac{t}{C_P}}} \\= \left(\alpha(f) +  [\alpha(\psi^2/\eta)\chi^2_2(\eta \circ \mu | \beta)]^{1/2} e^{-\frac{t}{C_P}}\right)\left[ 1 + \frac{  [\alpha(\psi^2/\eta)\chi^2_2(\eta \circ \mu | \beta)]^{1/2} e^{-\frac{t}{C_P}}}{1-  [\alpha(\psi^2/\eta)\chi^2_2(\eta \circ \mu | \beta)]^{1/2} e^{-\frac{t}{C_P}}}\right] \\
\leq\alpha(f) +  [\alpha(\psi^2/\eta)\chi^2_2(\eta \circ \mu | \beta)]^{1/2} e^{-\frac{t}{C_P}} + \left(\alpha(\psi) + 1 \right) \frac{  [\alpha(\psi^2/\eta)\chi^2_2(\eta \circ \mu | \beta)]^{1/2} e^{-\frac{t}{C_P}} }{1-  [\alpha(\psi^2/\eta)\chi^2_2(\eta \circ \mu | \beta)]^{1/2} e^{-\frac{t}{C_P}} } \\
\leq \alpha(f) + \left(1 + \frac{\alpha(\psi) + 1}{1-  [\alpha(\psi^2/\eta)\chi^2_2(\eta \circ \mu | \beta)]^{1/2}}\right)  [\alpha(\psi^2/\eta)\chi^2_2(\eta \circ \mu | \beta)]^{1/2} e^{-\frac{t}{C_P}}.
\end{multline*}
In a same way, one can prove that, for any $t \geq 0$,
$$\alpha(f) - (2+\alpha(\psi))   [\alpha(\psi^2/\eta)\chi^2_2(\eta \circ \mu | \beta)]^{1/2} e^{-\frac{t}{C_P}} \leq   \frac{\alpha(f) -   [\alpha(\psi^2/\eta)\chi^2_2(\eta \circ \mu | \beta)]^{1/2} e^{-\frac{t}{C_P}}}{1 +  [\alpha(\psi^2/\eta)\chi^2_2(\eta \circ \mu | \beta)]^{1/2} e^{-\frac{t}{C_P}}} .$$
As a result, using \eqref{encadrement}, for any $t \geq 0$,
\begin{multline*}
\sup_{|f| \leq \psi} \left| \E_\mu(f(X_t)|\tau_\d > t) - \alpha(f) \right|\\ \leq  \left[\left(1 + \frac{\alpha(\psi) + 1}{1-  [\alpha(\psi^2/\eta)\chi^2_2(\eta \circ \mu | \beta)]^{1/2}}\right) \lor  (2+\alpha(\psi))\right] [\alpha(\psi^2/\eta)\chi^2_2(\eta \circ \mu | \beta)]^{1/2} e^{- \frac{t}{C_P}} \\
\leq (a+b \alpha(\psi)) (\alpha(\psi^2/\eta)\chi^2_2(\eta \circ \mu | \beta))^{1/2} e^{-\frac{t}{C_P}},\end{multline*}
setting
$$a := 1 + \frac{1}{1- \sqrt{0.9}},~~~~~~~~\text{       and      }~~~~~~~~b := \frac{1}{1- \sqrt{0.9}}.$$
\underline{\textit{Second step : Conclusion.}}\\\\
Now let $\mu \in \cM_1(E)$ such that $\chi_2(\eta \circ \mu | \beta) < + \infty$. Recalling the notation
$$\phi_t(\mu) := \P_\mu[X_t \in \cdot | \tau_\d > t],$$  one has the following lemma, whose the proof is postponed after the end of this proof.
\begin{lemma}
\label{checkpoint}
For any $t \geq 0$ and $\mu \in \cM_1(E)$,
$$\eta\circ \phi_t(\mu) = (\eta \circ \mu)\tilde{P}_t$$
\end{lemma}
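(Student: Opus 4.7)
The plan is a direct verification by testing both sides of the claimed identity against an arbitrary bounded measurable function $f$. Both $\eta*\phi_t(\mu)$ and $(\eta*\mu)\tilde{P}_t$ are probability measures on $E$, so it suffices to check that they integrate every $f \in \cB(E)$ to the same value.

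First I would unfold the left-hand side. By definition of the semi-flow and the normalization $\eta*\nu$, for any $f \in \cB(E)$,
\[
[\eta*\phi_t(\mu)](f) \;=\; \frac{\phi_t(\mu)(\eta f)}{\phi_t(\mu)(\eta)} \;=\; \frac{\E_\mu[\eta(X_t) f(X_t) \1_{\tau_\d > t}]}{\E_\mu[\eta(X_t) \1_{\tau_\d > t}]} \;=\; \frac{\mu P_t[\eta f]}{\mu P_t[\eta]}.
\]
Using the eigenfunction relation $P_t \eta = e^{-\lambda_0 t} \eta$, the denominator simplifies to $e^{-\lambda_0 t} \mu(\eta)$, giving
\[
[\eta*\phi_t(\mu)](f) \;=\; \frac{e^{\lambda_0 t}\, \mu P_t[\eta f]}{\mu(\eta)}.
\]

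Next I would unfold the right-hand side using the definition $\tilde{P}_t f(x) = e^{\lambda_0 t} P_t[f\eta](x)/\eta(x)$:
\[
[(\eta*\mu)\tilde{P}_t](f) \;=\; \int_E \tilde{P}_tf(x)\,(\eta*\mu)(dx) \;=\; \int_E \frac{e^{\lambda_0 t} P_t[f\eta](x)}{\eta(x)} \cdot \frac{\eta(x)\,\mu(dx)}{\mu(\eta)} \;=\; \frac{e^{\lambda_0 t}\, \mu P_t[f\eta]}{\mu(\eta)}.
\]
The two expressions coincide, which proves the lemma. There is no real obstacle here: the statement is essentially the tautological observation that the Doob $h$-transform acting on the $\eta$-weighted initial law is exactly the $\eta$-weighting of the conditioned law, and it reduces to one application of $P_t \eta = e^{-\lambda_0 t} \eta$ to cancel the normalizing denominators. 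The only mild point to be careful about is that $\mu(\eta)$ must be finite and non-zero, which is automatic once one restricts to initial measures $\mu$ with $\chi_2(\eta*\mu \,|\, \eta*\alpha) < +\infty$ as in the preceding step of the main proof.
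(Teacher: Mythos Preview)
Your proof is correct and follows essentially the same approach as the paper: both unfold the definitions of $\eta*\phi_t(\mu)$ and $(\eta*\mu)\tilde{P}_t$ and use the eigenfunction identity $P_t\eta = e^{-\lambda_0 t}\eta$ to simplify the denominator, arriving at the common expression $e^{\lambda_0 t}\mu P_t[f\eta]/\mu(\eta)$. The only cosmetic difference is that the paper writes the argument as a single chain of equalities from left to right, whereas you compute each side separately and match them.
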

Then, using Lemma \ref{checkpoint} and the inequality \eqref{decay}, one has for any $t \geq 0$,
$$\chi^2_2(\eta\circ \phi_t(\mu) | \beta) = \chi^2_2((\eta \circ \mu)\tilde{P}_t | \beta) \leq e^{-\frac{2t}{C_P}} \chi^2_2(\eta \circ \mu| \beta).$$
In particular, there exists $t_\mu \geq 0$ such that, for any $t \geq t_\mu$,
$$\alpha(\psi^2/\eta) \chi^2_2(\eta\circ \phi_t(\mu) | \beta) < 0.9.$$
Hence, applying what we obtained at the first step, one has, for any $t \geq t_\mu$,
\begin{align*}
    \sup_{|f| \leq \psi} | \E_\mu[f(X_t)|\tau_\d > t] - \alpha(f) | &\leq (a+b \alpha(\psi)) [\alpha(\psi^2/\eta)\chi^2_2(\eta\circ \phi_{t_\mu}(\mu) | \beta)]^{1/2} e^{-\frac{t-t_\mu}{C_P}} \\
    &\leq (a+b \alpha(\psi)) [\alpha(\psi^2/\eta)\chi^2_2(\eta \circ \mu | \beta)]^{1/2} e^{-\frac{t}{C_P}},
\end{align*}
which concludes the proof.
\end{proof}
Now, let us prove Lemma \ref{checkpoint}.
\begin{proof}[Proof of Lemma \ref{checkpoint}]
For any $t \geq 0$, $\mu \in \cM_1(E)$ and for any measurable function $f$,
\begin{align*}
    \eta\circ \phi_t(\mu)(f) &= \frac{\phi_t(\mu)(f \eta)}{\phi_t(\mu)(\eta)} \\
    &= \frac{\mu P_t [f \eta]}{\mu P_t[\eta]} \\
    &= \frac{e^{\lambda_0 t} \mu P_t [f \eta]}{\mu(\eta)} \\
    &= \frac{1}{\mu(\eta)} \int_E \tilde{P}_t[f](x) \eta(x) \mu(dx) \\
    &= (\eta \circ \mu) \tilde{P}_t[f],
\end{align*}
where the equality $P_t[\eta](x) = e^{- \lambda_0 t} \eta(x)$, for any $t \geq 0$ and $x \in E$, was used. 
\end{proof}

\begin{remark}
By the tensorization property of Poincaré inequalities (see \cite[Proposition 4.3.1]{bakry2013analysis}), the Poincaré constant $C_P$ does not depend on the dimension when the state space is a tensorial space. As a result, contrary to the technics using Lyapunov functions or minorization properties, the previous theorem provides in such cases a rate of convergence which does not explode in high dimension (as soon as the state space $E$ is the product space of one-dimensional spaces $E_i$). \end{remark}

\begin{remark} 
\label{polynomial}
In the same manner, subgeometrical convergences to quasi-stationarity can be proved replacing the conditions (P2) and (LS2) by weaker functional inequalities, such as Nash inequalities or weak Poincaré inequalities (see \cite{liggett1991,rockner2001weak}). This method does not allow however to cover all the processes having this property of subgeometrical convergence (see for example \cite{occafrain2020polynomial} where the Doob transform is not ergodic).   
\end{remark}  

\begin{remark}
As stated in Corollary \ref{cor1}, the previous method using the Doob transform $\tilde{P}_t$ allows to get convergence in $1$-Wasserstein distance through a Poincaré inequality. A natural question is therefore if one can use the logarithmic Sobolev inequality
$$\int_E f^2 \log\left(\frac{f^2}{\|f\|_{\L^2(\beta)}}\right) d\beta \leq -C_{LS} \int_E f \tilde{\cL} f d \beta~~~~\text{(with $C_{LS} > 0$)}$$ 
to deal with the convergence in $p$-Wasserstein distance,  which is defined by 
$$\cW_p(\mu, \nu) := \inf_{(X,Y) \in \Pi(\mu,\nu)} \E[d(X,Y)^p]^{1/p},~~~~\forall \mu, \nu \in \cP_p(E).$$
By the same methodology and using that, for any $\mu,\nu \in \cM_1(E)$,
$$H(\mu | \nu) = \sup_{f \in \cB(E)} \{ \mu(f) - \log(\nu(e^f))\},$$
where $H(\mu | \nu) := \int_E \log\left(\frac{d\mu}{d\nu}\right) d\nu$ (when $\mu \ll \nu$) is the \textit{entropy}, one obtains the one-sided estimate
$$\E_\mu[f(X_t)|\tau_\d > t] \leq \log(\beta(e^{f/\eta})) + C H(\eta \circ \mu | \beta) e^{-\frac{t}{C_{LS}}}.$$
This estimate is unfortunately not sharp enough, since $\log(\beta(e^{f/\eta})) \geq \alpha(f)$, and the convergence in $\cW_p$ for general $p$ still remains an open question.
\end{remark}

\subsection{On the domain of attraction of $\alpha$}
\label{DoA}

This subsection follows Remark \ref{domain-of-attraction}. In this remark, it was pointed that a consequence of Theorem \ref{thm-poin} is the following inclusion: 
$$\left\{\mu \in \cM_1(E) : \exists t_0 \geq 0, \chi_2(\eta \circ \phi_{t_0}(\mu) | \beta) < + \infty\right\} \subset \cD(\alpha),$$
where $\cD(\alpha)$ is the domain of attraction of $\alpha$, that is the set of initial measures such that the convergence of $\P_\mu[X_t \in \cdot | \tau_\d > t]$ to $\alpha$ holds. 

The aim of this subsection is to go a bit further on this point. Let us first state and prove the following proposition, which gives an expression of $\chi_2(\eta \circ \mu | \eta \circ \alpha)$ revealing the density function $\frac{d \mu}{d \alpha}$:

\begin{proposition}
\label{domain-of-attraction-lemma}
For any $\mu \in \cM_1(E)$ such that $\frac{d \mu}{d \alpha}$ exists,
$$\chi^2_2(\eta \circ \mu | \eta \circ \alpha) = \frac{1}{\mu(\eta)^2} \int_E \frac{d \mu}{d \alpha}(y) \times \eta(y) \mu(dy) - 1.$$
In particular, $\chi_2(\eta \circ \mu | \eta \circ \alpha) < + \infty$ if and only if $\mu(\eta) > 0$ and $\int_E \frac{d \mu}{d \alpha} \eta d\mu < + \infty$.
\end{proposition}

\begin{proof}
First of all, recalling that
\begin{equation} \label{def}\eta \circ \mu(dx) = \frac{\eta(x) \mu(dx)}{\mu(\eta)},~~~~\eta \circ \alpha(dx) = \eta(x) \alpha(dx),\end{equation}
$\frac{d \mu}{d \alpha}$ exists if and only if $\frac{d (\eta \circ \mu)}{d (\eta \circ \alpha)}$ exists. Then,
by definition of $\chi_2$,
$$\chi_2^2(\eta \circ \mu | \eta \circ \alpha) = \int_E \left(\frac{d (\eta \circ \mu)}{d (\eta \circ \alpha)} - 1 \right)^2 d(\eta \circ \alpha) = \int_E \left(\frac{d (\eta \circ \mu)}{d (\eta \circ \alpha)} \right)^2 d(\eta \circ \alpha) - 1.$$
Using again \eqref{def}, 
$$\int_E \left(\frac{d (\eta \circ \mu)}{d (\eta \circ \alpha)} \right)^2 d(\eta \circ \alpha) = \int_E \frac{d (\eta \circ \mu)}{d (\eta \circ \alpha)} d(\eta \circ \mu) = \frac{1}{\mu(\eta)^2} \int_E \frac{d \mu}{d \alpha}(y) \times \eta(y) \mu(dy).$$
Thus,
$$\chi^2_2(\eta \circ \mu | \eta \circ \alpha) = \frac{1}{\mu(\eta)^2} \int_E \frac{d \mu}{d \alpha}(y) \times \eta(y) \mu(dy) - 1,$$
which concludes the proof.
\end{proof}

Proposition \ref{domain-of-attraction-lemma} entails the following corollary:

\begin{corollary}
\label{attraction}
Let $\mu \in \cM_1(E)$ such that $\mu(\eta) > 0$. If there exists $t_0 \geq 0$ such that $\frac{d \phi_{t_0}(\mu)}{d \alpha}$ exists and  
$$\int_E \frac{d \phi_{t_0}(\mu)}{d \alpha} \eta d\phi_{t_0}(\mu) < + \infty,$$
then $\chi_2(\eta \circ \phi_{t_0}(\mu) | \eta \circ \alpha) < + \infty$. 
\end{corollary}

\begin{proof}
In order to use Proposition \ref{domain-of-attraction-lemma}, we have just to ensure that 
$$\phi_{t_0}(\mu)(\eta) > 0.$$
However, one has 
$$\phi_{t_0}(\mu)(\eta) = \frac{\mu P_{t_0} \eta}{\mu P_{t_0} \1_E} = \frac{e^{-\lambda_0 t_0} \mu(\eta)}{\P_\mu[\tau_\d > t_0]} > 0,$$
which concludes the proof.
\end{proof}


As a consequence, Corollary \ref{attraction} entails, for a process satisfying $(P1) - (P3)$, that if $\mu(\eta) > 0$ and there exists $t_0 \geq 0$ such that $\int_E \frac{d \phi_{t_0}(\mu)}{d \alpha} \eta d\phi_{t_0}(\mu) < + \infty$, then $\P_\mu[X_t \in \cdot | \tau_\d > t]$ converges to $\alpha$ exponentially fast at rate $1/C_P$. This property will be then used in Subsection \ref{ex} to show, in both examples presented in this subsection, that there is an exponential decay at rate $1/C_P$ when the initial measure is a Dirac measure, even if such a measure does not admit any density function with respect to $\alpha$.  

Also, Proposition \ref{domain-of-attraction-lemma} entails another interesting corollary, when $\eta$ is known to be upper-bounded:

\begin{corollary}
Let $\mu \in \cM_1(E)$. If $\eta$ is upper-bounded, $\mu(\eta) > 0$ and $\chi_2(\mu | \alpha) < + \infty$, then $$\chi_2(\eta \circ \mu | \eta \circ \mu) < + \infty.$$  
\end{corollary}

\begin{proof}
If $\eta$ is upper-bounded, for any $\mu \in \cM_1(E)$ absolutely continuous with respect to $\alpha$,
$$\int_E \frac{d\mu}{d\alpha} \eta d\mu \leq \|\eta\|_\infty \left(1 + \chi_2^2(\mu | \alpha) \right).$$
\end{proof}

\section{Bakry-\'{E}mery condition and quasi-stationarity : application to diffusion processes}
\label{section-be}

In a practical way, Theorem \ref{thm-poin} is hardly useable because the expressions of the quasi-stationary distribution $\alpha$ and the eigenfunction $\eta$ are scarcely explicitly known, so the conditions (P2)-(P3) cannot be checked. In this section, diffusion processes will be only dealt with and easy-to-check assumptions will be given.  

In all what follows, the space $\R^d$ will be endowed with the $L^1$-distance 
\begin{equation}
    \label{distance}
    d(x,y) := \sum_{i=1}^d |x_i - y_i|
\end{equation}
for any $x = (x_i)_{i=1, \ldots, d}$ and $y = (y_i)_{i=1, \ldots, d}$. In particular, this distance will be implicitly used for the definition of $\cW_1$.

Let $D \subset \R^d$ be an open subset of $\R^d$ and $\d D$ its boundary. Let $(X_t)_{t \geq 0}$ be the absorbed diffusion process following
\begin{equation}
\label{sed2}
dX_t = \sqrt{2} dB_t - \nabla V(X_t) dt,~~~~~~X_t \in D,
\end{equation}
with a $d$-dimensional Brownian motion $(B_t)_{t \geq 0}$ and $V \in \cC^2(\R^d)$, and absorbed when $t \geq \tau_\d$, where $$\tau_\d := \inf\{t \geq 0 : X_t \in \d D\}.$$ In order to keep the same notation as the ones in Section \ref{section-poin}, let $(P_t)_{t \geq 0}$ the sub-Markovian semi-group defined in \eqref{semi-group}, $\cL$ the sub-Markovian generator defined in \eqref{generator} and $\cD(\cL)$ the associated domain of definition defined in \eqref{domain-of-definition}. In particular, any function $f \in \cC^2(D)$ with compact support in $D$ belongs to $\cD(\cL)$, and for such a function and $x \in D$,
$$\cL f(x) = \Delta f(x) - \nabla V(x) \cdot \nabla f(x).$$
Denote by 
$$\gamma(dx) := e^{-V(x)}dx.$$
$\gamma$ is therefore one reversible measure for $\cL$. Note that $\gamma$ is not necessarily defined as a probability measure. In all what follows, it will be assumed that
$$\gamma(D) < + \infty.$$
\subsection{Proof of Theorem \ref{premiere}}
In this subsection, we will prove Theorem \ref{premiere} stated earlier in the introduction, that we recall below :
\begin{theorem}
\label{thm}
Let $(X_t)_{t \geq 0}$ following \eqref{sed2} and such that $\gamma(D) < + \infty$. \begin{enumerate}[({BE}1)]
    \item Assume that there exists a nonnegative function $\eta \in \cD(\cL)$ defined on $D \cup \d D$, positive on $D$ and vanishing on $\d D$, such that $\gamma(\eta^2) < + \infty$ and there exists $\lambda_0 > 0$ such that $$\Delta \eta - \nabla V \cdot \nabla \eta = - \lambda_0 \eta.$$ 
    \item Assume moreover that there exists $\kappa > 0$ such that
\begin{equation}
    \label{bakry}
    \Hess [V - 2 \log(\eta)] \geq \kappa \Id.
\end{equation}
\end{enumerate}
Then $\gamma(\eta) < + \infty$, the probability measure $\alpha := \eta \circ \gamma$ is a quasi-stationary distribution for $(X_t)_{t \geq 0}$ and 
\begin{enumerate}[(i)]
\item there exists a constant $C > 0$ such that, for any $\mu \in \cM_1(D)$ and $t \geq 0$,
\begin{equation}
\label{3}
    \left\| \P_\mu(X_t \in \cdot | \tau_\d > t) - \alpha \right\|_{TV} \leq C \frac{\sqrt{\gamma(D)\gamma(\eta^2)}}{\gamma(\eta)}\chi_2(\eta \circ \mu | \eta \circ \alpha) e^{-\kappa t},
\end{equation}
\item If moreover there exists $x_0 \in D$ such that
$$\int_D (1+d(x,x_0))^2e^{-V(x)}dx < + \infty,$$
then for any $\mu \in \cM_1(D)$, there exists $t_\mu$ such that for any $t \geq t_\mu$,
\begin{equation}
\label{4}
    \cW_1(\P_\mu[X_t \in \cdot | \tau_\d > t], \alpha) \leq C(\gamma, \eta) \chi_2(\eta \circ \mu|\eta \circ \alpha) e^{- \kappa t},
\end{equation}
where 
$$C(\gamma, \eta) := \left[a + b \frac{\int_D (1+d(x,x_0))\eta(x)\gamma(dx)}{\gamma(\eta)}\right]\frac{ \sqrt{\gamma(\eta^2)\int_D (1+d(x,x_0))^2 \gamma(dx)}}{\gamma(\eta)}.$$
\end{enumerate}
\end{theorem}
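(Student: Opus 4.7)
The plan is to reduce Theorem \ref{thm} to Theorem \ref{thm-poin} (and Corollary \ref{cor1} for part (ii)) by taking $\alpha := \eta\gamma/\gamma(\eta)$ as candidate quasi-stationary distribution and performing the standard Doob transform via $\eta$. First, Cauchy-Schwarz gives $\gamma(\eta) \leq \sqrt{\gamma(D)\gamma(\eta^2)} < +\infty$, so $\alpha$ is a well-defined probability measure. Reversibility of $\gamma$ for the non-killed generator $\cL$, together with the fact that $\eta$ vanishes on $\d D$ (which kills the boundary terms in the integration by parts), yields $\int_D f\,\cL\eta\, d\gamma = \int_D \eta\,\cL f\, d\gamma$ for $f$ in a suitable domain; combined with $\cL\eta = -\lambda_0\eta$ from (BE1), this gives $\alpha P_t = e^{-\lambda_0 t}\alpha$, so $\alpha$ is a QSD with positive eigenfunction $\eta$. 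After the harmless rescaling $\eta \leftarrow (\gamma(\eta)/\gamma(\eta^2))\,\eta$ enforcing $\alpha(\eta) = 1$, hypothesis (P1) of Theorem \ref{thm-poin} is satisfied.

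Next I carry out the Doob transform $\tilde{P}_t f := e^{\lambda_0 t} P_t(f\eta)/\eta$. Since the carré du champ of $\cL$ is $\Gamma(f,g) = \nabla f\cdot\nabla g$, a direct computation shows $\tilde{\cL}f = \cL f + \Gamma(f,\eta)/\eta = \frac{1}{2}\Delta f - \frac{1}{2}\nabla W\cdot\nabla f$ with $W := V - 2\log\eta$. Thus $\tilde{P}_t$ is the semi-group of a reversible diffusion on $D$ with invariant probability measure $\eta^2\gamma/\gamma(\eta^2) = \eta*\alpha$. Because $W \to +\infty$ at $\d D$, the drift of the transformed process repels it from the boundary and $\tilde{P}_t$ is a bona fide Markov semi-group without absorption. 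The Bakry-Émery theorem then applies directly: (BE2) yields $\Gamma_2 \geq (\kappa/2)\Gamma$, hence a Poincaré inequality for $\eta*\alpha$ associated to $\tilde{\cL}$. Substituting back $\tilde{\cL}f = \cL f + \Gamma(f,\eta)/\eta$ and $d(\eta*\alpha) = \eta\,d\alpha$ transforms this Poincaré inequality into exactly (P2).

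To finish, I verify (P3) and invoke Theorem \ref{thm-poin} and Corollary \ref{cor1}. For part (i) take $\psi \equiv 1$: then $\alpha(\psi) = 1$ and, once $\eta$ is rescaled so that $\alpha(\eta) = 1$, $\alpha(\psi^2/\eta) = \gamma(D)\gamma(\eta^2)/\gamma(\eta)^2$, which produces precisely the constant $\sqrt{\gamma(D)\gamma(\eta^2)}/\gamma(\eta)$ appearing in \eqref{3}. For part (ii) take $\psi(x) := 1 + d(x,x_0)$; the hypothesis $\int_D (1+d(x,x_0))^2 e^{-V(x)}dx < +\infty$ together with Cauchy-Schwarz and $\gamma(\eta^2) < +\infty$ ensures both $\alpha(\psi)$ and $\alpha(\psi^2/\eta)$ are finite, and Corollary \ref{cor1} then yields \eqref{4} with the advertised constant $C(\gamma,\eta)$.

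The main technical obstacle is the rigorous application of the Bakry-Émery criterion on the domain $D$ with non-empty boundary: one needs the pointwise bound $\Gamma_2 \geq (\kappa/2)\Gamma$ inside $D$ to produce a genuine Poincaré inequality for $\eta*\alpha$ on a dense subspace of $\L^2(\eta*\alpha)$, with no hidden boundary contribution. The mechanism is that the logarithmic barrier encoded in $W = V - 2\log\eta$ acts as a soft confinement: the Doob-transformed diffusion is naturally supported on $D$ without ever touching $\d D$, so the classical $\Gamma_2$-calculus argument goes through as in the unconstrained case. Once this point is settled, the remaining verifications are essentially bookkeeping.
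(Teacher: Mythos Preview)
Your proposal is correct and follows essentially the same route as the paper: Cauchy--Schwarz for $\gamma(\eta)<\infty$, reversibility of $\gamma$ to identify $\alpha=\eta*\gamma$ as the quasi-stationary distribution, computation of the Doob-transformed generator $\tilde{\cL}f=\frac{1}{2}\Delta f-\frac{1}{2}\nabla(V-2\log\eta)\cdot\nabla f$, application of the Bakry--\'Emery criterion to obtain (P2) with $C_P=1/\kappa$, verification of (P3) with $\psi\equiv 1$ for part (i) and $\psi=1+d(\cdot,x_0)$ for part (ii), and invocation of Theorem~\ref{thm-poin} and Corollary~\ref{cor1}. The only difference is that you are more explicit than the paper about the boundary issue (the ``soft confinement'' via $W\to+\infty$ at $\partial D$), which the paper handles by a bare citation to \cite[Proposition~4.8.1]{bakry2013analysis}; this extra care is welcome but does not change the structure of the argument.
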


\begin{remark}
\label{r}
Following Remark $1$, it is possible to state general assumptions on the potential $V$ to entail (BE1). For example, when $D = (0,+\infty)^d$, the condition 
\begin{equation}\label{c}\lim_{|x| \to \infty} \nabla V(x) = + \infty\end{equation}
entails Assumption (F) in \cite{CV2017c} (see this paper for the proof), so entails (P1) and (BE1).
\end{remark}

\begin{proof} 
First of all, remark that the property $\gamma(\eta) < + \infty$ comes from the Cauchy-Schwarz inequality and the fact that $\gamma(\eta^2) \lor \gamma(D) < + \infty$ by assumptions. \\
To prove that $\alpha := \eta \circ \gamma$ is a quasi-stationary distribution for $X$, the proof of \cite[Theorem 1.1]{CMSM} will be adapted to general Kolmogorov diffusion processes. Denote by 
$$L:= \Delta - \nabla V \cdot \nabla.$$
By Itô's formula, for any $f$ twice continuously differentiable with compact support in $D$,
$$e^{\lambda_0 t} f(X_{t \land \tau_\d}) = f(X_0) + \int_0^{t \land \tau_\d} (L f(X_{s \land \tau_\d}) + \lambda_0 f(X_{s \land \tau_\d}))ds + \cM_t,$$
where $\cM$ is a martingale. Hence, for any $x \in D$ and $t \geq 0$,
\begin{equation}\label{perdu}e^{\lambda_0 t} \E_x[f(X_{t \land \tau_\d})] = f(x) + \E_x\left(\int_0^{t \land \tau_\d} (L f(X_{s \land \tau_\d}) + \lambda_0 f(X_{s \land \tau_\d}))ds\right).\end{equation}
Since $L$ is symmetric with respect to $\gamma$ (i.e. $\int_{\R^d} g Lhd\gamma = \int_{\R^d} Lg h d\gamma, \forall g,h \in \L^2(\gamma)$), by Fubini's theorem, for any $s \geq 0$,
$$\E_{\eta \circ \gamma}[Lf(X_{s \land \tau_\d}) + \lambda_0 f(X_{s \land \tau_\d})] = 0.$$
Hence, integrating \eqref{perdu} over $\eta \circ \gamma$ and using Fubini's theorem,
$$e^{\lambda_0 t} \E_{\eta \circ \gamma}[f(X_{t \land \tau_\d})] = \eta \circ \gamma(f).$$
As a result, it is shown that, for any $f$ twice continuously differentiable with compact support in $D$,
$$(\eta \circ \gamma) P_tf = e^{-\lambda_0 t} \eta \circ \gamma(f).$$
Since $\eta$ vanishes at the boundary of $D$, the probability measure $\alpha = \eta \circ \gamma$ is a quasi-stationary distribution, associated to $\lambda_0$. Moreover, remark that the measure $\gamma(dx) = e^{-V(x)}dx$ is a reversible measure for the semi-group $(P_t)_{t \geq 0}$, which means that, for any $f,g \in \cB(D)$,
$$\int_D (P_tf)g d\gamma = \int_D f(P_tg)d\gamma,~~~~\forall t \geq 0.$$ 
Then, for any $t \geq 0$ and $f \in \cB(D)$,
\begin{align*}
    \int_D (P_t \eta)fd\gamma = \int_D \eta(P_tf)d\gamma = \gamma(\eta) \alpha P_t f = e^{- \lambda_0 t} \gamma(\eta) \alpha(f) = e^{-\lambda_0 t} \int_D \eta f d\gamma.
\end{align*}
Thus, $\eta$ is also an eigenfunction for $(P_t)_{t \geq 0}$, associated to $\lambda_0$.  
Now, consider again the Doob transform $(\tilde{P}_t)_{t \geq 0}$ defined by
$$\tilde{P}_t f(x) := e^{\lambda_0 t} \frac{P_t[\eta \times f](x)}{\eta(x)},~~~~\forall x \in D, \forall f \in \cB(D).$$
Then the generator of the semi-group $(\tilde{P}_t)_{t \geq 0}$, denoted by $\tilde{\cL}$, endowed with its domain $\cD(\tilde{\cL})$, is 
\begin{align*}
    \tilde{\cL}f(x) 
 &= \Delta f(x) - \nabla \left[V - 2\log(\eta)\right](x) \cdot \nabla f(x),~~~~\forall x \in D, \forall f \in \cD(\tilde{\cL}). 
\end{align*}
The condition \eqref{bakry} is therefore the Bakry-\'{E}mery condtion for the generator $\tilde{\cL}$. This implies therefore (see for example \cite[Proposition 4.8.1]{bakry2013analysis}) that the invariant measure $\beta := \eta \circ \alpha = \eta^2 \circ \gamma$ for the semi-group $(\tilde{P}_t)_{t \geq 0}$ satisfies a Poincaré inequality with $C_P = \frac{1}{\kappa}$, that is
$$\Var_\beta(f) \leq - \frac{1}{\kappa} \int_D f \tilde{\cL}f d\beta,$$
which is (P2). 

In order to deal with the total variation distance, it is enough to take $\psi = 1$. For such a choice of $\psi$, one has
$$\eta \circ \gamma\left(\frac{\psi^2}{\eta/\alpha(\eta)}\right) = \eta \circ \gamma(\eta) \times \eta \circ \gamma(1/\eta) 
 = \frac{\gamma(D)\gamma(\eta^2)}{\gamma(\eta)^2} < \infty.$$
Hence the condition (P3) of the Theorem \ref{thm-poin} is satisfied for $\psi = 1$. So, by Theorem \ref{thm-poin}, one has 
\begin{align*}
    \| \P_\mu(X_t \in \cdot | \tau_\d > t) - \eta \circ \gamma \|_{TV} &= \sup_{|f| \leq 1} \left| \E_\mu[f(X_t)|\tau_\d > t] - \eta \circ \gamma(f) \right| \\ &\leq C \frac{\sqrt{\gamma(D)\gamma(\eta^2)}}{\gamma(\eta)}\chi_2(\eta \circ \mu | \eta^2 \circ \gamma) e^{-\kappa t}.
\end{align*} 
The point $(ii)$ of Theorem \ref{thm} is a straightforward consequence of Corollary \ref{cor1}.
\end{proof}

\begin{remark}
Exponential decays like \eqref{3} and \eqref{4} hold also under weaker assumptions than \eqref{bakry}, such as the two followings:
\begin{itemize}
    \item There exists $c > 0$ and $R \geq 0$ such that for $|x| > R$,
    \begin{equation}
    \label{sat}
        x \cdot \nabla [V - 2 \log(\eta)](x) \geq c |x|.
    \end{equation}
    \item There exists $a \in (0,1)$, $c > 0$ and $R \geq 0$ such that for $|x| > R$, 
    $$a |\nabla [V - 2 \log(\eta)](x)|^2 - \Delta [V - 2 \log(\eta)](x) > c.$$
\end{itemize}
These two conditions actually appear in \cite[Corollary 1.6]{BBCG2008} and imply (P2). In particular, the first condition is satisfied when $V - 2 \log(\eta)$ is convex. It will be shown later that, for diffusion processes on $(0,+\infty)^d$, the convexity of $V$ implies the one of $V - 2 \log(\eta)$ for a particular eigenfunction $\eta$ (which is not unique a priori), so \eqref{sat} is satisfied.
\end{remark}

\subsection{Two examples}
\label{ex}

In this subsection, two examples whose the eigenfunctions $\eta$ can be explicitly computed are studied through Theorem \ref{thm}: a scaled Brownian motion living in a hypercube, and an Ornstein-Uhlenbeck process living on $(0, + \infty)^d$. In particular, several spectral properties will be claimed throughout this subsection. We refer the reader to the Appendix, at the end of the paper, for a few proofs on these spectral properties.

\subsubsection{Brownian motion in a hypercube}
Concerning quasi-stationarity for Brownian motion living in the interior of a general compact set in $\R^d$ and absorbed at its boundary, we refer the reader to \cite[Theorem 1.1]{CMSM}.

Consider the open set $D = C_N := (-N,N)^d$, with $N \in \N$, and $V = 0$, that is to say $(X_t)_{t \geq 0} = (\sqrt{2}B_t)_{t \geq 0}$. 
Then the function $\eta_{Bm}$ defined by
\begin{equation}\label{eta}\eta_{Bm}(x_1, \ldots, x_d) := \prod_{i=1}^d \cos\left(\frac{\pi}{2N}x_i\right),~~~~\forall (x_1, \ldots, x_n) \in C_N,\end{equation}
is an eigenfunction of $\Delta$ with respect to the eigenvalue $-\lambda_0$, where 
$$\lambda_0 = \frac{d \pi^2}{4N^2}.$$
A proof of this claim is written in Appendix, at the end of the paper. Moreover, $\eta_{Bm}$ is positive on $C_N$, vanishing at $\d C_N$ and 
\begin{align*}
    \gamma(\eta_{Bm}^2) &= \int_{C_N} \eta_{Bm}^2(x)dx
    = \left( \int_{-N}^N \cos^2\left(\frac{\pi}{2N}x\right)\right)^d = N^d.
\end{align*}
Thus (BE1) is satisfied. Now, for any $(x_1, \ldots, x_d) \in C_N$ and $i,j = 1, \ldots, d$, 
$$(\Hess \log(\eta_{Bm}(x_1, \ldots, x_d)))_{i,j} = \left\{
      \begin{array}{cc}
        -\left(\frac{\pi}{2N}\right)^2 \left[1 + \tan^2\left(\frac{\pi}{2N}x_i\right)\right] &\text{ if } i=j\\
        0 &\text{ otherwise}\\
      \end{array}
    \right.$$
Hence, the Bakry-\'{E}mery condition \eqref{bakry} in (BE2) holds for $\kappa = \frac{\pi^2}{2N^2}$. Then, Theorem \ref{thm} entails that the probability measure 
$$\alpha_{Bm}(dx) = \eta_{Bm} \circ \gamma(dx) = \frac{\eta_{Bm}(x)dx}{\gamma(\eta_{Bm})} =  \left(\frac{\pi}{4N}\right)^d \prod_{i=1}^d \cos\left(\frac{\pi}{2N}x_i\right)dx$$
is a quasi-stationary distribution for $(X_t)_{t \geq 0}$ and 
So, there exists $C > 0$ such that, for any initial measure $\mu \in \cM_1(C_N)$ and $t \geq 0$,
\begin{multline}
    \label{e}
    ||\P_\mu(X_t \in \cdot | \tau_\d > t) - \alpha_{Bm}||_{TV} \leq C \frac{\sqrt{\gamma(D) \gamma(\eta_{Bm}^2)}}{\gamma(\eta_{Bm})} \chi_2(\eta_{Bm}\circ \mu | \eta_{Bm}\circ \alpha_{Bm}) \exp\left(-\frac{\pi^2}{2N^2} t\right).
\end{multline}



Now, one has
$$\gamma(\eta_{Bm}) = \left(\int_{-N}^N \cos\left(\frac{\pi}{2N}x\right)dx\right)^d = \left(\frac{4N}{\pi}\right)^d, ~~~~\text{ and }~~~~\gamma(D) = (2N)^d.$$
As a result, \eqref{e} becomes: for any $\mu \in \cM_1(C_N)$ and $t \geq 0$,
\begin{equation*}
    ||\P_\mu(X_t \in \cdot | \tau_\d > t) - \alpha_{Bm}||_{TV} \leq C \left(\frac{\pi}{2\sqrt{2}}\right)^d \chi_2(\eta_{Bm}\circ \mu | \eta_{Bm}\circ \alpha_{Bm}) \exp\left(-\frac{\pi^2}{2N^2} t\right).
\end{equation*}
Note however that this Bakry-\'{E}mery coefficient $\kappa$ is not optimal. In particular, denoting $\beta_{Bm} := \eta_{Bm}\circ \alpha_{Bm}$, if we use directly Theorem \ref{thm-poin}, the Poincaré constant $C_P$ is equal to
\begin{equation} \label{trou-spectral}1/C_P = \inf_{f \in \L^2(\beta_{Bm}), \beta_{Bm}(f) = 0} \frac{- \int_E f \tilde{\cL} f d \beta_{Bm}}{\int_E f^2 d \beta_{Bm}}.\end{equation}
By this formula, one can compute (see Appendix to see the computation) that
$$\frac{1}{C_P} = \frac{3 \pi^2}{4 N^2} > \kappa.$$
Thus, by Theorem \ref{thm-poin}, for any $\mu \in \cM_1(E)$ and $t \geq 0$,
$$\| \P_\mu[X_t \in \cdot | \tau_\d > t] - \alpha_{Bm}\|_{TV} \leq C \left(\frac{\pi}{2\sqrt{2}}\right)^d \chi_2(\eta_{Bm}\circ \mu | \beta_{Bm}) \exp\left(-\frac{3 \pi^2}{4 N^2}t\right).$$
Note then, as mentioned in Remark \ref{petcou}, the Poincaré constant $C_P$ and the Bakry-Emery constant $\kappa$ depend on the domain $D$ through the size $N$.

Concerning the $1$-Wasserstein distance, one can remark that the exponential decay in total variation distance \eqref{e} implies the one in $\cW_1$. As a matter of fact, since we are studying a process living on the compact set $(-N,N)^d$, one has, for any $\mu, \nu \in \cM_1(C_N)$,
$$\cW_1(\mu, \nu) \leq d N \| \mu - \nu \|_{TV}.$$
This inequality allows actually to get a better estimate for the decay in $1$-Wasserstein distance than the one provided by Corollary \ref{cor1}. 

Finally, note that, if the initial measure $\mu$ admits a density function with respect to Lebesgue's measure which vanishes at $\d C_N$, $\mu(\eta_{Bm}) > 0$ and, since $\alpha_{Bm}(dx) = \frac{\eta_{Bm}(x) dx}{\gamma(\eta_{Bm})}$, $$\int_{C_N} \frac{d \mu}{d \alpha_{Bm}} \eta_{Bm} d\mu = \gamma(\eta_{B_m}) \int_{C_N} \left(\frac{d\mu}{dx}(x)\right)^2 dx < + \infty,$$
so that $\chi_2(\eta_{Bm} \circ \mu | \beta_{Bm}) < + \infty$ according to Proposition \ref{domain-of-attraction-lemma}. If $\mu = \delta_x$, one can show that the probability measure $\P_\mu[X_1 \in \cdot | \tau_\d > 1]$ admits a density function with respect to Lebesgue's measure which vanishes at $\d C_N$, so one has also  $$\chi_2(\eta_{Bm} \circ  \P_\mu[X_1 \in \cdot | \tau_\d > 1] | \beta_{Bm}) < + \infty.$$ 
By Remark \ref{domain-of-attraction}, this shows that the convergence of $\P_\mu[X_t \in \cdot | \tau_\d > t]$ to $\alpha_{Bm}$ at rate $\frac{3 \pi^2}{4 N^2}$ holds for any initial distributions $\mu$.

\subsubsection{Ornstein-Uhlenbeck process}
For this example, consider $D = (0,+ \infty)^d$ and $V(x) = \frac{\lambda}{2} \sum_{i=1}^d x_i^2$, with $\lambda > 0$. Then, $(X_t)_{t \geq 0}$ is a $d$-dimensional Ornstein-Uhlenbeck process satisfying the following stochastic differential equation
\begin{equation} \label{OU}dX_t = \sqrt{2}dB_t - \lambda X_t dt,~~~~X_t \in (0,+\infty)^d.\end{equation}
A positive eigenfunction of $\Delta - \nabla V \cdot \nabla$ is \begin{equation} \label{etaou}\eta_{OU}(x_1, \ldots, x_n) := \prod_{i=1}^d x_i,~~~~\forall (x_1, \ldots, x_d) \in D,\end{equation}
associated to the eigenvalue $- \lambda d$ (the computation is in Appendix). Noting that, for this example, 
$$\gamma(dx) = \prod_{i=1}^d e^{-\frac{\lambda x_i^2}{2}}dx,$$
one has furthermore
$$\gamma(\eta_{OU}^2) = \left(\int_0^\infty x^2 e^{-\frac{\lambda x^2}{2}}dx\right)^d = \left(\sqrt{\frac{\pi}{2\lambda^3}}\right)^d.$$
Thus, (BE1) in Theorem \ref{thm} is satisfied. 

For any $x = (x_1, \ldots, x_d)$ and $i,j = 1, \ldots, d$,
$$[\Hess(V - 2 \log(\eta_{OU}))(x)]_{i,j}= \left\{
      \begin{array}{cc}
       \lambda + \frac{2}{x_i^2} &\text{ if } i=j\\
        0 &\text{ otherwise}\\
      \end{array}
    \right.$$
So the Bakry-\'{E}mery condition \eqref{bakry} is satisfied for $\kappa = \lambda$.
 Hence, by Theorem \ref{thm}, the probability measure 
 $$\alpha_{OU}(dx) := \eta_{OU} \circ \gamma(dx) = \frac{\eta_{OU}(x) \gamma(dx)}{\gamma(\eta_{OU})} = \lambda^d \prod_{i=1}^d x_i e^{-\frac{\lambda x_i}{2}}dx$$
 is a quasi-stationary distribution for $(X_t)_{t \geq 0}$ there exists $C_d > 0$ such that, for any $\mu$ and $t$ large enough,
$$\cW_1(\P_\mu[X_t \in \cdot | \tau_\d > t], \alpha_{OU}) \leq C_d \chi_2(\eta_{OU} \circ \mu | \eta_{OU} \circ \alpha_{OU}) \exp\left(- \lambda t\right).$$
Note that the rate of convergence does not depend on the dimension $d$, but the constant $C_d$ explodes in high dimension. More precisely, after computations, one can show that, when ${d \to + \infty}$, 
$$C_d \sim \frac{d(d-1)}{4 \lambda} \left(\frac{\pi}{2}\right)^d.$$

Contrary to the previous example, the probability measure $\P_\mu[X_t \in \cdot | \tau_\d > t]$ does not converge to $\alpha_{OU}$ for any initial distribution $\mu$. The curious reader can read the paper \cite{LSM2000}, where it is shown, in the one-dimensional case, that there exists an infinity of quasi-stationary distributions for Ornstein-Uhlenbeck processes absorbed by $D^c$, each associated with their own domain of attraction. It is then expected that the property 
$$\chi_2(\eta_{OU} \circ \mu | \eta_{OU}\circ \alpha_{OU}) < + \infty$$
cannot be satisfied for every initial distributions. However, $\alpha_{OU}$ has the property that, for any $x \in D$, $\P_x[X_t \in \cdot | \tau_\d > t] \underset{t \to \infty}{\longrightarrow} \alpha_{OU}$ (this can be seen for example in \cite{CV2017c}). Such a quasi-stationary distribution is called a \textit{Yaglom limit} (see for example \cite{MV2012} for the definition). Our aim is then to show that there exists $t_0 > 0$ such that
\begin{equation} \label{control}\chi_2(\eta_{OU} \circ \phi_{t_0}(\delta_x) | \eta_{OU} \circ \alpha_{OU}) < + \infty,~~~~\forall x \in D.\end{equation}
To do so, we will use Corollary \ref{attraction}, stating that this holds when $\eta_{OU}(x) > 0$ for all $x \in D$, which is satisfied, and when 
$$\int_D \frac{d \phi_{t_0}(\delta_x)}{d \alpha_{OU}} \eta_{OU} d \phi_{t_0}(\delta_x) < + \infty.$$
For sake of simplicity, let us only deal with the case $d = 1$ (the result in general dimension could be deduced by tensorization). In this case, for any $x \in (0,+\infty)$, one claims that the positive measure $\delta_x P_1 = \P_x[X_1 \in \cdot, \tau_\d > 1]$ admits a density function with respect to the measure $\sqrt{\frac{2\lambda}{\pi}}\1_{x \in D}\gamma(dx)$, denoted by $f_x$, which admits the following representation 
\begin{equation} \label{representation} f_x = \sum_{n \text{ odd}} e^{-\lambda n} F_n(x) F_n,~~~~~~\text{ in } \L^2\left(\gamma\right),\end{equation}
with
$$F_n(x) = \frac{H_n(\sqrt{\lambda} x)}{\sqrt{n!}},~~~~\forall n \in \Z_+, \forall x \in (0,+ \infty),$$
where $(H_n)_{n \in \Z_+}$ are Hermite polynomials. See the Appendix for the definition of Hermite polynomials and the proof of this claim.  

Now, let us prove \eqref{control} for $t_0 = 1$. Since $\P_x[\tau_\d > 1] > 0$ for all $x \in D$, \eqref{control} is equivalent to 
$$\int_0^\infty \frac{\delta_x P_1}{d \alpha_{OU}} \eta_{OU} d (\delta_x P_1) < + \infty.$$
Thus, using that $\delta_x P_1(dy) = \sqrt{\frac{2\lambda}{\pi}} f_x(y) \gamma(dy)$ and $\alpha_{OU}(dy) = \lambda \eta_{OU}(y) \gamma(dy)$, \eqref{control} is equivalent to 
$$\int_0^\infty f^2_x(y) \sqrt{\frac{2\lambda}{\pi}} \gamma(dy) < + \infty.$$
Then, using the representation \eqref{representation} and the fact that $(F_n)_{n \in \N}$ is orthonormal in $\L^2\left(\sqrt{\frac{2 \lambda}{\pi}} \gamma\right)$ (this is proved in the Appendix),
\begin{align*}\int_0^\infty f^2_x(y) \sqrt{\frac{2\lambda}{\pi}} \gamma(dy) &= \sum_{n \text{ odd}} e^{-2 \lambda n} F_n^2(x) \\
&\leq \sum_{n \in \Z_+} e^{-2 \lambda n} F_n^2(x).
\end{align*}
However, the representation $\sum_{n \in \Z_+} e^{-\lambda n} F_n(x) F_n$ is no less than the density function, with respect to $\sqrt{\frac{\lambda}{2\pi}} \gamma$, of the marginal law at time $1$ of an Ornstein-Uhlenbeck process of parameter $\lambda$, starting at $x$ (this fact is also proved in the Appendix, see \eqref{conclusion}). In other terms, for any $y \in \R$, 
$$\sum_{n \in \Z_+} e^{-\lambda n} F_n(x) F_n(y) \propto \frac{e^{-\frac{\lambda(y - xe^{-\lambda})^2}{1 - e^{-2 \lambda}}}}{e^{-\lambda \frac{y^2}{2}}},~~~~~~\text{ in } \L^2(\gamma).$$
This entails that, for any $x \in (0,+\infty)$,
$$\sum_{n \in \Z_+} e^{-2 \lambda n} F_n^2(x) \propto \int_\R \frac{e^{-\frac{2\lambda(y - xe^{-\lambda})^2}{1 - e^{-2 \lambda}}}}{e^{-\lambda \frac{y^2}{2}}} dy < + \infty.$$
Tu sum up, \eqref{control} holds for $t_0 = 1$ and for any $x \in (0,+\infty)$, which entails that 
$$\chi_2(\eta_{OU} \circ \phi_1(\delta_x) | \eta_{OU} \circ \alpha_{OU}) < + \infty,~~~~\forall x \in D.$$

\subsection{Diffusion processes on $(0, \infty)^d$}
\label{dif}

In general, contrary to the two previous examples, the eigenfunction $\eta$ cannot be explicitly given, so the assumptions of Theorem \ref{thm} cannot be checked in practice. In this subsection, one will see how to bypass this problem for diffusion processes living on $D = (0,+\infty)^d$ and absorbed when one of its component reaches $0$.

\subsubsection{When $d = 1$}
Take a one-dimensional diffusion process following
\begin{equation}
\label{cdfi}
dX_t = \sqrt{2} dB_t -  V'(X_t) dt \end{equation}
living on $D = (0,+\infty)$ and absorbed at $\d = 0$, where $V$ is a $\cC^2$-function. Then, one gets the following proposition. 


\begin{proposition}
\label{lemma}
Assume that $V$ is convex on $(0,+\infty)$ and 
$$\lim_{x \to + \infty} V'(x) = + \infty.$$
Then there exists an eigenfunction $\eta$ such that $\log(\eta)$ is concave.
\end{proposition}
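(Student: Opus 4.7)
Setting $w := (\log\eta)' = \eta'/\eta$ and substituting $\eta = e^{u}$ with $u = \log\eta$ into the eigenvalue equation $\cL\eta = -\lambda\eta$ yields the Riccati ODE
\[ w'(x) = V'(x)\,w(x) - w(x)^2 - 2\lambda. \]
Concavity of $\log\eta$ is equivalent to $w$ being non-increasing, i.e.\ to the pointwise inequality $w(x)^2 - V'(x)w(x) + 2\lambda \geq 0$ on $(0,+\infty)$. By convexity of $V$ the derivative $V'$ is non-decreasing, and combined with $V'(x)\to +\infty$ this yields a clean phase-plane picture for the non-autonomous Riccati: where the discriminant $V'(x)^2 - 8\lambda$ is negative the inequality holds automatically, while where it is positive the two nullclines
\[ w_\pm(x) := \tfrac{V'(x)}{2} \pm \sqrt{\tfrac{V'(x)^2}{4} - 2\lambda} \]
are well defined, with $w_+$ unstable and increasing to $+\infty$ and $w_-$ attracting from below and decreasing to $0$. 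The inequality then reads $w \geq w_+$ or $w \leq w_-$ on that region.

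The plan is a shooting argument in $\lambda > 0$. The hypothesis $V'(x) \to +\infty$ implies that $\cL$ has compact resolvent on $\L^2(\gamma)$ (with $\gamma(dx) = e^{-V(x)}dx$), hence discrete spectrum. Taking $\lambda = \lambda_0$ the principal eigenvalue and $\eta$ the associated positive Dirichlet eigenfunction (unique up to scaling) vanishing linearly at $0$, one has $w(x)\sim 1/x$ as $x\to 0^+$. I would then show that this trajectory stays inside $\{w \geq w_+\}\cup\{w \leq w_-\}$ throughout $(0,+\infty)$, by eliminating the two alternatives. First, a trajectory remaining above $w_+$ near infinity would force $w(x)\gtrsim V'(x)$ and hence $\eta(x) \gtrsim e^{V(x)}$, contradicting $\eta\in\L^2(\gamma)$. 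Second, a trajectory entering the forbidden strip $(w_-,w_+)$ would have $w'>0$ there, eventually cross $w_+$ and produce zeros of $\eta$, contradicting positivity. The only consistent possibility is the trajectory that threads through the point where $w_-$ and $w_+$ meet (at $V'(x_{\lambda_0}) = 2\sqrt{2\lambda_0}$) and afterwards follows $w_-$ down to $0$; along this trajectory the quadratic $w^2 - V'w + 2\lambda_0$ is non-negative pointwise, so $w'\leq 0$ on $(0,+\infty)$ and $\log\eta$ is concave, as required.

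The main obstacle is the rigorous justification of this saddle-threading: one must show that the Riccati trajectory associated with the principal eigenfunction passes exactly through the meeting point of the two nullclines rather than veering off into one of the excluded regimes. I would handle this by combining standard Sturm-Liouville spectral theory (uniqueness and positivity of the principal Dirichlet eigenfunction) with comparison and continuity arguments for the $\lambda$-parameterised family of Riccati solutions, exploiting monotonicity in $\lambda$ to pin down the separatrix. The hypotheses of the proposition are used precisely at this step: convexity of $V$ gives the monotonicity of $V'$ needed for the clean saddle geometry, while $V'(x) \to +\infty$ guarantees both the existence of the meeting point $x_{\lambda_0}$ and the asymptotic decay $w_-(x)\to 0$ that makes $\eta$ lie in $\L^2(\gamma)$.
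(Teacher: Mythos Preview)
Your approach via the Riccati equation for $w=(\log\eta)'$ is genuinely different from the paper's, which is probabilistic. The paper uses the representation $\eta(x)=\lim_{t\to\infty}e^{\lambda_0 t}\P_x(\tau_\d>t)$ (imported from the Champagnat--Villemonais Lyapunov framework) together with the strong Markov property to obtain the multiplicative identity $\eta(x+h)=\E_{x+h}\bigl[e^{\lambda_0\tau_x}\bigr]\,\eta(x)$, whence $(\log\eta)'(x)=\lim_{h\downarrow 0}\bigl(\E_{x+h}[e^{\lambda_0\tau_x}]-1\bigr)/h$. Convexity of $V$ makes the drift $-\tfrac12 V'$ non-increasing, and a pathwise SDE comparison theorem (Ikeda--Watanabe) then gives that $x\mapsto\E_{x+h}[e^{\lambda_0\tau_x}]$ is non-increasing for each fixed $h$, hence $(\log\eta)'$ is non-increasing. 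No shooting or phase-plane analysis is needed.

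Your Riccati sketch, by contrast, contains a concrete error. The claim that a trajectory entering the strip $(w_-,w_+)$ would ``eventually cross $w_+$ and produce zeros of $\eta$'' fails on both counts. In the strip $w'>0$ while $w_+$ is non-decreasing (convexity of $V$), and at any contact with $w_+$ one has $(w-w_+)'=-w_+'\le 0$, so the trajectory cannot exit upward; the symmetric argument at $w_-$ shows it cannot exit downward either. A trajectory trapped in the strip therefore has $w>0$ throughout, $\eta$ stays positive and increasing, and no zeros arise; nor is the strip obviously incompatible with $\eta\in\L^2(\gamma)$. Your geometric picture is also off: for the Ornstein--Uhlenbeck case ($V(x)=\lambda x^2$, $\eta(x)=x$, $w(x)=1/x$) one checks directly that $w(x)<w_-(x)$ for every $x$ beyond the point where the nullclines appear, so the principal trajectory does not ``thread through the saddle'' at all but passes strictly below it. What actually singles out the principal eigenfunction among trajectories in $\{0<w<w_-\}$ is that it is the unique one asymptotic to $w_-$ rather than crossing $0$ and blowing up to $-\infty$ in finite time; your proposal neither identifies this mechanism nor supplies the asymptotic $w\to 0$ on which any such argument must rest.
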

\begin{proof}
In \cite[Corollary 4.2.]{CV2017c}, it is shown that, under the condition $\lim_{x \to + \infty} V'(x) = + \infty$, there exists a unique positive eigenfunction $\eta \in \cC^2(D)$ for $P_t$, for all $t \geq 0$, such that $P_t \eta = e^{-\lambda_0 t}$ and
\begin{equation}
\label{eigenfunction}
\eta''(x) - V'(x) \eta'(x) = - \lambda_0 \eta(x),~~~~\forall x \in D,\end{equation}
with $\lambda_0 > 0$, and such that there exists $C, \theta > 0$ such that, for any $x \in D$ and $t \geq 0$, 
\begin{equation}
\label{convergence}
\left| \eta(x) -  e^{\lambda_0 t} \P_x(\tau_\d > t) \right| \leq C e^{- \theta t} \varphi^{1/p}(x),\end{equation}
where $p > 1$ and $\varphi$ is a Lyapunov function such that there exists $D_0 \subset (0,+ \infty)$, $C' > 0$ and $\lambda > 0$ large enough such that
$$\varphi''(x) - V'(x) \varphi'(x) \leq - \lambda \varphi(x) + C' \1_{x \in D_0}, ~~~~\forall x \in D.$$
For any $x \geq 0$, $h > 0$ and $t \geq 0$,
\begin{align*}
    \P_{x+h}(\tau_\d > t) &= \P_{x+h}(\tau_\d > t, \tau_x \leq t) + \P_{x+h}(\tau_x > t) \\ &= \E_{x+h}[\1_{\tau_x \leq t} \P_x(\tau_\d > t - u)|_{u = \tau_x}] + \P_{x+h}(\tau_x > t),
\end{align*}
where $\tau_x$ is the hitting time of $x$ by the process $(X_t)_{t \geq 0}$, and where the strong Markov property is used for the second equality.

Considering the process $(X_{t \land \tau_x})_{t \geq 0}$ absorbed at $x$, it is also a diffusion process coming down from infinity. So there exists also a positive function $\eta_x$ on $(x,+ \infty)$ and a positive constant $\lambda_x$ such that, for any $y > x$, 
$$\eta_x(y) = \lim_{t \to \infty} e^{\lambda_x t} \P_y(\tau_x > t).$$
Since $\tau_0$ dominates stochastically $\tau_x$, $\lambda_0 < \lambda_x$ for any $x > 0$, so 
$$\lim_{t \to + \infty} e^{\lambda_0 t} \P_{x+h}(\tau_x > t) = 0.$$ 
Now remark that for any $x \geq 0$, $h > 0$ and $t \geq 0$,
$$e^{\lambda_0 t} \E_{x+h}[\1_{\tau_x \leq t} \P_x(\tau_\d > t - u)|_{u = \tau_x}] = \E_{x+h}[\1_{\tau_x \leq t} e^{\lambda_0 \tau_x} \times [e^{\lambda_0 (t-u)} \P_x(\tau_\d > t - u)]_{u = \tau_x}].$$
The random variable $\1_{\tau_x \leq t} e^{\lambda_0 \tau_x} \times [e^{\lambda_0 (t-u)} \P_x(\tau_\d > t - u)]_{u = \tau_x}$ is upper bounded by  $e^{\lambda_0 \tau_x}[ \eta(x) + C \varphi^{1/p}(x)]$, and, for $h$ small enough, $\E_{x+h}(e^{\lambda_0 \tau_x}) < \infty$ (see \cite[Proposition 3]{MV2012}). Moreover, it converges to $\eta(x) e^{\lambda_0 \tau_x}$ when $t$ goes to infinity. 
So, by the Lebesgue's theorem, 
$$\lim_{t \to + \infty} e^{\lambda_0 t} \E_{x+h}[\1_{\tau_x \leq t} \P_x(\tau_\d > t - u)|_{u = \tau_x}] = \E_{x+h}[e^{\lambda_0 \tau_x}] \eta(x).$$
In conclusion, one has
$$\eta(x+h) = \lim_{t \to \infty} e^{\lambda_0 t} \P_{x+h}(\tau_\d > t) = \E_{x+h}[e^{\lambda_0 \tau_x}] \eta(x).$$
So, for any $h > 0$ small enough,
$$\frac{\eta(x+h)-\eta(x)}{h} = \eta(x) \frac{\E_{x+h}[e^{\lambda_0 \tau_x}]-1}{h}.$$ 
Then, since $\eta \in \cC^2((0,+\infty))$, for any $x > 0$, $\lim_{h \downarrow 0} \frac{\E_{x+h}[e^{\lambda_0 \tau_x}]-1}{h}$ exists and 
$$\eta'(x) = \eta(x) \lim_{h \downarrow 0} \frac{\E_{x+h}[e^{\lambda_0 \tau_x}]-1}{h}.$$
In other words, one has
$$\log(\eta)'(x) =  \lim_{h \downarrow 0} \frac{\E_{x+h}[e^{\lambda_0 \tau_x}]-1}{h}.$$ 
Now, for $h$ fixed, since $V$ is convex, the derivative $V'$ is non-decreasing and, by \cite[Theorem 1.1, Chapter VI, p.437]{IW1989}, the function $x \mapsto \E_{x+h}(e^{\lambda_0 \tau_x})$ is non-increasing, so one has for any $x \leq x'$,
$$ \lim_{h \downarrow 0} \frac{\E_{x+h}[e^{\lambda_0 \tau_x}]-1}{h} \geq  \lim_{h \downarrow 0} \frac{\E_{x'+h}[e^{\lambda_0 \tau_{x'}}]-1}{h}.$$
So the function $\log(\eta)'$ is non-increasing, which implies that the function $\log(\eta)$ is concave. 
\end{proof}
The previous proposition actually tells us that, assuming $V$ convex, the second derivative of $V - 2 \log(\eta)$ is greater than the one of $V$. In particular, Proposition \ref{lemma} entails the following corollary:
\begin{corollary}
\label{corollary}
Let $(X_t)_{t \geq 0}$ satisfying \eqref{cdfi} and assume that there exists $\kappa > 0$ such that
$$V''(x) \geq \kappa,~~~~\forall x \in (0, + \infty).$$
Then there exists a quasi-stationary distribution $\alpha$, which is absolutely continuous with respect to $\gamma$, and a constant $C > 0$ such that, for any $\mu \in \cM_1(D)$ and for $t$ large enough,
$$\| \P_\mu[X_t \in \cdot | \tau_\d > t] - \alpha \|_{TV} \leq  C \chi_2(\eta \circ \mu | \eta \circ \alpha) e^{- \kappa t},$$
and 
$$\cW_1(\P_\mu[X_t \in \cdot | \tau_\d > t], \alpha) \leq C \chi_2(\eta \circ \mu | \eta \circ \alpha) e^{- \kappa t},$$
where $\eta := \frac{d \alpha}{d \gamma}$. 
\end{corollary}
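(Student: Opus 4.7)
The plan is to obtain Corollary \ref{corollary} as a direct consequence of Theorem \ref{thm} applied with the eigenfunction supplied by Proposition \ref{lemma}. Since $V''\geq\kappa>0$, the function $V$ is strictly convex on $(0,+\infty)$ and $V'$ is nondecreasing with slope at least $\kappa$, so $V'(x)\to+\infty$ as $x\to+\infty$. Proposition \ref{lemma} therefore applies and yields a positive eigenfunction $\eta$ of $\cL$ such that $\log\eta$ is concave on $(0,+\infty)$. Because $\eta$ is constructed there as the large-time limit of $e^{\lambda_0 t}\P_x(\tau_\d>t)$ for a process coming down from infinity, $\eta$ vanishes at the boundary $\d = 0$, as required by (BE1).

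The Bakry-\'{E}mery condition (BE2) is then immediate: by the concavity of $\log\eta$,
$$[V-2\log\eta]''(x)=V''(x)-2(\log\eta)''(x)\geq V''(x)\geq \kappa.$$
To complete (BE1), I must still check that $\gamma(\eta^2)<+\infty$. The key estimate is that concavity of $\log\eta$ implies the tangent-line bound $\log\eta(x)\leq A+Bx$ for some constants $A,B$, and hence $\eta(x)\leq e^{A+Bx}$. On the other hand, strong convexity of $V$ gives the quadratic lower bound $V(x)\geq V(x_0)+V'(x_0)(x-x_0)+\tfrac{\kappa}{2}(x-x_0)^2$, so $e^{-V(x)}$ decays like a Gaussian at infinity, which dominates $e^{2Bx}$ and yields $\gamma(\eta^2)<+\infty$. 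The same Gaussian decay immediately gives the finite second-moment condition $\int_0^{+\infty}(1+x)^2 e^{-V(x)}dx<+\infty$ required by Theorem \ref{thm}(ii).

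With (BE1) and (BE2) both established, Theorem \ref{thm}(i)--(ii) applies. It produces the quasi-stationary distribution $\alpha=\eta*\gamma$, which is absolutely continuous with respect to $\gamma$ with density proportional to $\eta$, and yields both exponential inequalities with rate $e^{-\kappa t}$. The only remaining task is to reconcile notations: from the definition \eqref{notation} one directly checks that $\eta^2*\gamma=\eta*\alpha$, since both probability measures have density proportional to $\eta^2$ with respect to $\gamma$. Hence the upper bound $\chi_2(\eta*\mu\,|\,\eta^2*\gamma)$ coming from Theorem \ref{thm} matches the $\chi_2(\eta*\mu\,|\,\eta*\alpha)$ of the corollary's statement. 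The step I expect to require the most care is the control of $\eta$ at infinity: concavity of $\log\eta$ alone allows exponential growth, and it is essential that the strong convexity of $V$ forces a Gaussian tail in $\gamma$ to close the integrability gap.
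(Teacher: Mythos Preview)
Your proof is correct and follows essentially the same route as the paper: verify $V'(x)\to+\infty$ to invoke Proposition~\ref{lemma}, use the concavity of $\log\eta$ to get $(V-2\log\eta)''\geq V''\geq\kappa$, check the integrability conditions from the quadratic lower bound on $V$, and apply Theorem~\ref{thm}. You are in fact slightly more explicit than the paper in justifying $\gamma(\eta^2)<+\infty$ (via the tangent-line bound on $\log\eta$) and in reconciling $\eta^2*\gamma=\eta*\alpha$; one small inaccuracy is the phrase ``for a process coming down from infinity,'' since Proposition~\ref{lemma} only assumes $V'(x)\to+\infty$, but this does not affect the argument.
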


\begin{proof}
 Integrating twice the Bakry-\'{E}mery condition, there exists two constants $a_1,a_2 \in \R$ such that, for any $x > 0$, 
\begin{equation}
\label{minor}
V(x) \geq a_1 + a_2x + \frac{\kappa}{2} x^2.\end{equation}
Hence, one has $\lim_{x \to + \infty} V'(x) = + \infty$ and there exists an eigenfunction $\eta$ satisfying \eqref{eigenfunction} and \eqref{convergence}. Moreover, since $V$ is convex, $\log(\eta)$ is concave by Proposition \ref{lemma}, so for any $x > 0$,
$$(V - 2 \log(\eta))''(x) \geq V''(x) \geq \kappa,$$
which implies also that $\gamma(\eta^2) < + \infty$.
Hence the conditions (BE1)-(BE2) hold. Finally, by \eqref{minor},
$$\int_0^\infty (1+x)^2 e^{-V(x)} dx < + \infty,$$
which entails the exponential decay in total variation and $1$-Wasserstein distance by Theorem \ref{thm}, setting $\alpha := \eta \circ \gamma$.
\end{proof}
\subsubsection{One-dimensional processes coming down from infinity}
Let $(X_t)_{t \geq 0}$ be a solution of \eqref{cdfi} \textit{coming down from infinity}, which means that there exists a constant $\rho > 0$ such that $\sup_{x \geq 0} \E_x(e^{\rho \tau_\d}) < + \infty$ (see \cite{bansaye2017diffusions} for alternative definitions). Quasi-stationarity for such processes have been already studied in \cite{CV2017b}, in particular $(X_t)_{t \geq 0}$ absorbed at $0$ admits a unique quasi-stationary distribution $\alpha$ absolutely continuous with respect to $\gamma$ and an eigenfunction $\eta$, unique up to a multiplicative constant, satisfying the following relation (see \cite[Theorem 4.1.]{CV2017b}):  
\begin{equation}
    \label{rmk}\eta(x) = 4 \lambda_0 \int_0^\infty (x \land y) \eta(y) \gamma(dy),\end{equation}
where $-\lambda_0 < 0$ is the eigenvalue associated to $\alpha$ and $\eta$. Moreover, \cite[Proposition 4.2.]{CV2017b} states that $\eta$ is proportional to the function
\begin{equation*}
    x \mapsto \int_0^\infty (x \land y) \alpha(dy).
\end{equation*}
In particular, $\log(\eta)$ is concave, whatever the convexity of the potential $V$.
For these processes, one can state the following result :
\begin{theorem}
\label{thm-cdfi}
Let $(X_t)_{t \geq 0}$ following \eqref{cdfi} coming down from infinity such that 
$$\tilde{\kappa} := \inf_{x > 0} \left\{ V''(x) + 8 \lambda_0 e^{-V(x)} \right\} > 0.$$
Then there exists a constant $C > 0$ such that, for any $\mu \in \cM_1(D)$ and for $t$ large enough,
\begin{equation}
    \label{1}
    \| \P_\mu[X_t \in \cdot | \tau_\d > t] - \alpha \|_{TV} \leq  C \chi_2(\eta \circ \mu | \eta \circ \alpha) e^{- \tilde{\kappa} t},
\end{equation}
and 
\begin{equation}
    \label{2}
    \cW_1(\P_\mu[X_t \in \cdot | \tau_\d > t], \alpha) \leq C \chi_2(\eta \circ \mu | \eta \circ \alpha) e^{- \tilde{\kappa} t},
\end{equation}
If moreover $V'(x) > 0$ for any $x > 0$, then the previous statement holds for
$$\tilde{\kappa} := \inf_{x > 0} \left\{  V''(x) + 8 \lambda_0 e^{-V(x)} + 2 \lambda_0^2 \left(\frac{1 - 4 e^{-V(x)}}{V'(x)}\right)^2 \right\}.$$
\end{theorem}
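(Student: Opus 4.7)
The plan is to verify the Bakry-\'{E}mery hypotheses (BE1) and (BE2) of Theorem \ref{thm} with constant $\tilde{\kappa}$, and then invoke that theorem to deduce \eqref{1} and \eqref{2}. Under the coming down from infinity assumption the quasi-stationary distribution $\alpha$ and the eigenfunction $\eta$ are provided by \cite{CV2017b} together with the integral representation \eqref{rmk}, and the central new ingredient is that \eqref{rmk} yields a closed form for $\eta''/\eta$ even though $\eta$ itself is not explicit.

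Concretely, I would first differentiate \eqref{rmk} twice. Splitting
\begin{equation*}
\int_0^{\infty}(x\wedge y)\,\eta(y)e^{-V(y)}dy = \int_0^{x} y\,\eta(y)e^{-V(y)}dy + x\int_{x}^{\infty}\eta(y)e^{-V(y)}dy
\end{equation*}
gives $\eta'(x) = 4\lambda_0\int_x^{\infty}\eta(y)e^{-V(y)}dy$, and then
\begin{equation*}
\eta''(x) = -4\lambda_0\,\eta(x)\,e^{-V(x)},\qquad x>0.
\end{equation*}
Combined with the algebraic identity $(\log\eta)''(x) = \eta''(x)/\eta(x) - (\eta'(x)/\eta(x))^2$, this yields
\begin{equation*}
(V-2\log\eta)''(x) = V''(x) + 8\lambda_0 e^{-V(x)} + 2\bigl((\log\eta)'(x)\bigr)^2 .
\end{equation*}
Since the last term is nonnegative, (BE2) holds with the first form of $\tilde{\kappa}$, and Theorem \ref{thm} delivers \eqref{1} and \eqref{2}.

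For the sharper version under the assumption $V'>0$, I would combine the eigenvalue equation $\eta'' = V'\eta' - 2\lambda_0\eta$ (from $\cL\eta = -\lambda_0\eta$) with the formula $\eta''=-4\lambda_0\eta e^{-V}$ above to solve
\begin{equation*}
\frac{\eta'(x)}{\eta(x)} = \frac{2\lambda_0\bigl(1-2e^{-V(x)}\bigr)}{V'(x)},
\end{equation*}
and substitute into the expression for $(V-2\log\eta)''$; this produces exactly the sharper constant $V''(x)+8\lambda_0 e^{-V(x)}+8\lambda_0^2\bigl((1-2e^{-V(x)})/V'(x)\bigr)^2$. The remaining pieces of (BE1), namely positivity of $\eta$ on $(0,\infty)$, vanishing at the boundary $\{0\}$, and being an eigenfunction of $\cL$, are immediate from \eqref{rmk} and \cite{CV2017b}. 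Because $x\wedge y \leq x$ in \eqref{rmk} shows $\eta(x) \leq C\,x$ for some $C$ depending on $\alpha$, the finiteness of $\gamma(\eta^2)$ and of $\int_0^{\infty}(1+x)^2 e^{-V(x)}dx$ (needed for the $\cW_1$-statement) both reduce to a second-moment bound on $\gamma$, which is classical for processes coming down from infinity.

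The main obstacle is not the algebraic computation, which reduces to the two lines $\eta''/\eta=-4\lambda_0 e^{-V}$ and $(\log\eta)'' = \eta''/\eta - (\eta'/\eta)^2$, but the careful justification of (BE1): the differentiability of \eqref{rmk} up to the boundary, the integrability estimates above, and, for the sharper bound, the fact that $V'>0$ prevents the explicit formula for $\eta'/\eta$ from degenerating. Once (BE1)-(BE2) are established with rate $\tilde{\kappa}$, Theorem \ref{thm} applied with $\kappa = \tilde{\kappa}$ immediately gives both \eqref{1} and \eqref{2}.
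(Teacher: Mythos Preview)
Your proposal is correct and follows essentially the same route as the paper's proof: both differentiate the integral representation \eqref{rmk} to obtain $\eta''(x)=-4\lambda_0\eta(x)e^{-V(x)}$, deduce $(V-2\log\eta)''=V''+8\lambda_0 e^{-V}+2(\eta'/\eta)^2$, drop the nonnegative last term for the first $\tilde\kappa$, and under $V'>0$ combine with the eigenvalue equation to express $\eta'/\eta$ explicitly for the sharper constant. Your additional remarks on verifying (BE1) and the integrability conditions go slightly beyond what the paper writes out, but the core argument is identical.
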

\begin{remark}
In other words, this theorem states that the rate of convergence $\kappa$ coming from  the Bakry-\'{E}mery condition $V'' \geq \kappa$ can actually be improved replacing it by $\tilde{\kappa}$. Moreover, this entails that the exponential convergences \eqref{1} and \eqref{2} holds even if $V$ is concave in a neighborhood of $0$, as soon as the function $x \mapsto  V''(x) + 8 \lambda_0 e^{-V(x)}$ is lower-bounded by a positive constant.
\end{remark}
\begin{proof}[Proof of Theorem \ref{thm-cdfi}]
The idea is simply to apply Theorem \ref{thm} and to compute the best $\kappa$ satisfying 
$$[V - 2 \log(\eta)]''(x) \geq \kappa,~~~~\forall x > 0,$$
knowing \eqref{rmk}. First of all, for any $x > 0$,
$$[V - 2 \log(\eta)]''(x) = V''(x) - 2 \left[ \frac{\eta''(x)}{\eta(x)} - \left(\frac{\eta'(x)}{\eta(x)}\right)^2\right].$$
By the equality \eqref{rmk}, 
$$\eta(x) = 4 \lambda_0 \int_0^x y \eta(y) \gamma(dy) + 4 \lambda_0 x\int_x^\infty \eta(y) \gamma(dy).$$
Then, for any $x > 0$,
$$\eta'(x) = 4 \lambda_0 \int_x^\infty \eta(y) \gamma(dy),~~~~\eta''(x) = - 4 \lambda_0 \eta(x)e^{-V(x)}.$$
Hence, for any $x > 0$, 
$$[V - 2 \log(\eta)]''(x) = V''(x) + 8 \lambda_0 e^{-V(x)} + 2 \left(\frac{\eta'(x)}{\eta(x)}\right)^2.$$
As a result, assuming $\tilde{\kappa} := \inf_{x > 0} \left\{ V''(x) + 8 \lambda_0 e^{-V(x)} \right\} > 0$, one has
$$[V - 2 \log(\eta)]''(x) \geq \tilde{\kappa},~~~~\forall x \in (0,+\infty).$$
Now, assuming moreover $V'(x) > 0$ for any $x > 0$, and using that $\eta''(x) - V'(x)\eta'(x) = - \lambda_0 \eta(x)$ for any $x > 0$, one has 
$$\eta'(x) = \frac{\eta''(x) +  \lambda_0 \eta(x)}{V'(x)} =  \lambda_0 \eta(x) \frac{1 - 4 e^{-V(x)}}{V'(x)},$$
which entails that 
$$[V - 2 \log(\eta)]''(x) = V''(x) + 8 \lambda_0 e^{-V(x)} + 2 \lambda_0^2 \left(\frac{1 - 4 e^{-V(x)}}{V'(x)}\right)^2,$$
which concludes the proof.
\end{proof}

\begin{example}
Considering 
$$V : x \mapsto (x+1)^\delta,~~~~\delta > 2,$$
the underlying process $(X_t)_{t \geq 0}$ satisfying \eqref{cdfi} comes down from infinity, so Theorem \ref{thm-cdfi} applies and the inequalities \eqref{1} and \eqref{2} hold for $\tilde{\kappa} := \inf_{x > 0} \left\{  V''(x) + 8 \lambda_0 e^{-V(x)} + 2 \lambda_0^2 \left(\frac{1 - 4 e^{-V(x)}}{V'(x)}\right)^2 \right\}$. For this example, the eigenvalue $-\lambda_0$ is not explicitly known, but it is possible to compare it with the eigenvalue $-\lambda_{OU}$ associated to the one-dimensional absorbed Ornstein-Uhlenbeck process satisfying
$$dX^{OU}_t = \sqrt{2} dB_t - (X_t^{OU}+1)dt,$$
and such that, for any $x > 0$,
$$\lambda_{OU} = - \lim_{t \to \infty} \frac{\log P^{OU}_t[\1_{(0,+\infty)}](x)}{t},$$
where $(P^{OU}_t)_{t \geq 0}$ is the sub-Markovian semi-group associated to $(X^{OU}_t)_{t \geq 0}$.
This eigenvalue is explicitly known : 
$$\lambda_{OU} = 1.$$
Likewise one has, for any $x > 0$,
$$\lim_{t \to \infty} - \frac{\log \P_x[\tau_\d > t]}{t} = \lambda_0.$$
Hence, since $V'(x) \geq x+1$ for any $x > 0$, one deduces from the theorem of comparison \cite[Theorem 1.1, Chapter VI]{IW1989} that, for any $x > 0$, 
$$\lambda_0 \geq \lambda_{OU} = 1.$$
As a result, one has a lower-bound for $\lambda_0$ and one can choose $\tilde{\kappa}$ as 
$$\tilde{\kappa} := \inf_{x > 0} \left\{  V''(x) + 8 e^{-V(x)} + 2 \left(\frac{1 - 4 e^{-V(x)}}{V'(x)}\right)^2 \right\}.$$
\end{example}

\subsubsection{Multi-dimensional diffusion processes}

Now consider one $d$-dimensional diffusion process $(X_t)_{t \geq 0} := (X^1_t, \ldots, X^d_t)_{t \geq 0}$ satisfying 
$$dX_t =  \sqrt{2} d B_t -  \nabla V(X_t) dt$$
where, for any $x_1, \ldots, x_d \in (0,+\infty)^d$,
\begin{equation}
\label{decomposition}
V(x_1, \ldots, x_d) = \sum_{i=1}^d V_i(x_i),\end{equation}
where, for any $1 \leq i \leq d$, $V_i : [0, + \infty) \to \R$ is a convex $\cC^2([0,+\infty))$-function such that $\lim_{x \to \infty} V'_i(x) = + \infty$. We consider this process as  absorbed by the boundary of $[0,+\infty)^d$. In particular, $D = (0,+\infty)^d$ and $$\d D = \{(x_1, \ldots, x_d) \in [0,+\infty)^d : x_i = 0 \text{ for some } i\}.$$  
Denote $\eta$ a common nonnegative eigenfunction of $(P_t)_{t \geq 0}$. Then, for any $x_1, \ldots, x_d$, $\eta$ can be expressed as follows  
$$\eta(x_1, \ldots, x_d) := \prod_{i=1}^d \eta_i(x_i),$$
where $(\eta_i)_{i = 1, \ldots, d}$ are functions such that, for any $i$, there exists $\lambda_{0,i} > 0$ such that, for any $t \geq 0$ and $x \in (0,+\infty)$,
$$P^i_t \eta_i(x) = e^{- \lambda_{0,i} t} \eta_i(x),$$
where 
$$P_t^i f(x) = \E_x[f(X^i_t)\1_{\tau^i_\d > t}],~~~~\forall f \in \cB((0,+\infty)), \forall t \geq 0,$$
where $\tau^i_\d := \inf\{t \geq 0 : X^i_t = 0\}$.
$\eta$ is therefore associated to $\lambda_0 := \sum_{i=1}^d \lambda_{0,i}$, and one has for any $x_1, \ldots, x_d \in (0,+ \infty)^d$
$$(\Hess \log(\eta)(x_1, \ldots, x_d))_{i,j} = \left\{ \begin{array}{cc}
\log(\eta_i(x_i))'' & \text{ if } i=j \\
0 & \text{ otherwise.}
\end{array}
\right.$$
By what it was shown previously, for any $i = 1, \ldots, d$, $\log(\eta_i)$ is concave. As a result, one can state the following result, which is the the multi-dimensional version of Corollary \ref{corollary}, already stated in the Introduction. 
\begin{theorem}
\label{thmintro}
Assume that the potential can be written as \eqref{decomposition} and that there exists $\kappa > 0$ such that, for any $i = 1, \ldots, d$,
$$V_i''(x) \geq \kappa,~~~~\forall x \in [0,+\infty).$$
Then there exists a quasi-stationary distribution $\alpha := \eta \circ \gamma$ and a constant $C_d > 0$ (depending on the dimension $d$) such that, for any $\mu \in \cM_1(D)$ and $t$ large enough, 
$$\cW_1(\P_\mu[X_t \in \cdot | \tau_\d > t], \alpha) \leq C_d \chi_2(\eta \circ \mu | \eta \circ \alpha) e^{-\kappa t},$$
and 
$$\|\P_\mu[X_t \in \cdot | \tau_\d > t] - \alpha\|_{TV} \leq C_d \chi_2(\eta \circ \mu | \eta \circ \alpha) e^{-\kappa t}.$$     
\end{theorem}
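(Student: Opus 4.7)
The plan is to reduce the $d$-dimensional statement to Theorem \ref{thm} by exploiting the separable structure of $V$. The key point is that the product eigenfunction $\eta(x_1,\ldots,x_d) = \prod_{i=1}^d \eta_i(x_i)$ already introduced before the theorem inherits the one-dimensional concavity of each $\log(\eta_i)$ (Proposition \ref{lemma}) and therefore turns the assumption $V_i'' \geq \kappa$ into the full Bakry-\'Emery condition $\Hess[V-2\log(\eta)]\geq \kappa \Id$.

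First, I would verify that the hypotheses (BE1)--(BE2) of Theorem \ref{thm} hold. The function $\eta$ is positive on $D=(0,+\infty)^d$ and vanishes on $\d D$ because each $\eta_i$ vanishes at $0$ (as recalled in the 1-dimensional proof of Proposition \ref{lemma}). The identity $\cL\eta = -\lambda_0 \eta$ with $\lambda_0 = \sum_i \lambda_{0,i}$ follows directly from the separability of $V$ and $\eta$, exactly as written just above the theorem. Finiteness of $\gamma(\eta^2)$ is a Fubini argument: since $\gamma(dx) = \prod_i e^{-V_i(x_i)}dx_i$ and each one-dimensional $\gamma_i(\eta_i^2) < + \infty$ (this is part of Corollary \ref{corollary}), we get $\gamma(\eta^2) = \prod_i \gamma_i(\eta_i^2) < + \infty$. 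Next, because $V$ and $\log(\eta)$ are both separable in the $x_i$, the Hessian $\Hess[V-2\log(\eta)]$ is diagonal with $i$-th diagonal entry $V_i''(x_i) - 2 \log(\eta_i)''(x_i)$. By Proposition \ref{lemma} each $\log(\eta_i)$ is concave, so $-2\log(\eta_i)'' \geq 0$, and the assumption $V_i''\geq \kappa$ yields $\Hess[V-2\log(\eta)](x) \geq \kappa \Id$ for every $x\in D$, which is (BE2).

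Once (BE1)--(BE2) are checked, Theorem \ref{thm}(i) gives the total variation bound with constant $C_d := C\sqrt{\gamma(D)\gamma(\eta^2)}/\gamma(\eta)$, where the three factors $\gamma(D)$, $\gamma(\eta)$, $\gamma(\eta^2)$ are all products over $i=1,\ldots,d$ of one-dimensional finite quantities; this produces the explicit dimension-dependent constant. For the $1$-Wasserstein bound, I need the integrability condition of Theorem \ref{thm}(ii). Integrating the inequality $V_i''\geq \kappa$ twice gives $V_i(x) \geq a_i + b_i x + \tfrac{\kappa}{2} x^2$, so that the $L^1$ distance $d(x,x_0) = \sum_i |x_i - x_{0,i}|$ satisfies
\begin{equation*}
\int_D (1+d(x,x_0))^2 e^{-V(x)}dx \leq \prod_i \int_0^\infty \bigl(1+|x_i - x_{0,i}|\bigr)^2 e^{-V_i(x_i)}dx_i \cdot (\text{cross terms})< + \infty,
\end{equation*}
where the cross terms are handled by expanding the square and using that all moments of the one-dimensional measures $e^{-V_i}dx_i$ are finite thanks to the quadratic lower bound on $V_i$. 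Applying Theorem \ref{thm}(ii) then yields the announced $\cW_1$ bound with the same rate $\kappa$.

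The main obstacle I anticipate is purely notational: keeping track of the various dimensional constants to obtain a clean $C_d$, and confirming that the separability of $\eta$ (which the paper asserts just before the theorem) indeed produces a \emph{strict} eigenfunction of $\cL$ with all the regularity required by Theorem \ref{thm}. There is no conceptual difficulty beyond combining the one-dimensional results with the tensorisation of Poincaré inequalities implicitly used in Theorem \ref{thm} through the Bakry-\'Emery criterion.
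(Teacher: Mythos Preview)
Your proposal is correct and matches the paper's own (implicit) argument: the paper does not write out a separate proof of Theorem \ref{thmintro} but presents it as the direct multi-dimensional version of Corollary \ref{corollary}, relying on exactly the ingredients you list --- the product form $\eta=\prod_i\eta_i$, the diagonal Hessian of $V-2\log(\eta)$, the concavity of each $\log(\eta_i)$ from Proposition \ref{lemma}, and then an application of Theorem \ref{thm}. Your verification of the integrability condition for the $\cW_1$ bound via the quadratic lower bound on each $V_i$ is also the same computation done in the one-dimensional case.
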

Previously, it was seen, with the two examples of Subsection \ref{ex}, that the constant $C_d$ could explode when the dimension $d$ goes to infinity. However, it is possible to improve this result when the initial measure $\mu$ is the tensorial product of $d$ probability measures on $(0,+\infty)$. In this case,
since \eqref{decomposition} is assumed, the one-dimensional processes $(X^i)_{i = 1, \ldots, d}$ are mutually independent.
Moreover, since $\{X_t \ne 0\} = \bigcap_{i = 1, \ldots, d} \{X^i_t \ne 0\}$, then for any $t \geq 0$ and $\mu_1, \ldots, \mu_d \in \cM_1((0,+\infty))$,
$$\P_{\mu_1 \otimes \cdots \otimes \mu_d}[X_t \in \cdot | \tau_\d > t] = \P_{\mu_1}[X^1_t \in \cdot | \tau^1_\d > t] \otimes \cdots \otimes \P_{\mu_d}[X^d_t \in \cdot | \tau^d_\d > t],$$
 Then, one obtains the following theorem, which was also stated previously in the Introduction.
\begin{theorem}
\label{dernier}
Assume the assumptions of Theorem \ref{thmintro}. Then there exists a constant $C > 0$, which does not depend on the dimension, such that, for any $\mu_1, \ldots, \mu_d \in \cM_1((0,+\infty))$, and for $t$ large enough, 
$$\|\P_{\mu_1 \otimes \cdots \otimes \mu_d}[X_t \in \cdot | \tau_\d > t] - \alpha\|_{TV} \leq C \left[ \sum_{i=1}^d \chi_2(\eta_i \circ \mu_i | \eta_i \circ \alpha_i)\right] e^{-\kappa t},$$
and
$$\cW_1(\P_{\mu_1 \otimes \cdots \otimes \mu_d}[X_t \in \cdot | \tau_\d > t], \alpha) \leq C \left[ \sum_{i=1}^d \chi_2(\eta_i \circ \mu_i | \eta_i \circ \alpha_i)\right] e^{-\kappa t},$$
where $\alpha_i(dx) := \eta_i(x) e^{-V_i(x)}dx$.
\end{theorem}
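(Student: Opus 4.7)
My proof plan for Theorem \ref{dernier} exploits the product structure induced by \eqref{decomposition}. Because $V(x)=\sum_i V_i(x_i)$, the SDE decouples into $d$ independent one-dimensional diffusions $(X^i_t)_{t\geq 0}$ satisfying $dX^i_t = dB^i_t - \frac{1}{2}V_i'(X^i_t)dt$, and $\tau_\d = \min_i \tau^i_\d$. As already observed just before the statement, under a product initial law the coordinate-wise events $\{\tau^i_\d > t\}$ are mutually independent, so
$$\P_{\mu_1\otimes\cdots\otimes\mu_d}[X_t\in\cdot\mid\tau_\d>t] = \bigotimes_{i=1}^d \P_{\mu_i}[X^i_t\in\cdot\mid\tau^i_\d>t].$$
The reference measure also factorizes: $\gamma=\gamma_1\otimes\cdots\otimes\gamma_d$ with $\gamma_i(dx)=e^{-V_i(x)}dx$, and $\eta(x)=\prod_i \eta_i(x_i)$, so $\alpha=\alpha_1\otimes\cdots\otimes\alpha_d$ with $\alpha_i:=\eta_i*\gamma_i$.

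I would then apply Corollary \ref{corollary} coordinate by coordinate. Since $V_i''\geq \kappa$, each $V_i$ is convex and, by integrating twice as in \eqref{minor}, satisfies $\lim_{x\to+\infty}V_i'(x)=+\infty$, so the corollary yields, for $t$ large enough and every $i$,
$$\|\P_{\mu_i}[X^i_t\in\cdot\mid\tau^i_\d>t]-\alpha_i\|_{TV}\leq C\,\chi_2(\eta_i*\mu_i\,|\,\eta_i*\alpha_i)\,e^{-\kappa t},$$
with the analogous statement for $\cW_1$, and with the \emph{same} one-dimensional exponential rate $\kappa$. The next step is to combine these coordinate-wise estimates through two standard tensorization inequalities. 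For probability measures $\rho_i,\sigma_i$ on $(0,+\infty)$, iterating the identity $\|\mu\otimes\nu - \mu\otimes\nu'\|_{TV}=\|\nu-\nu'\|_{TV}$ along a telescoping chain gives
$$\Bigl\|\bigotimes_{i=1}^d \rho_i - \bigotimes_{i=1}^d \sigma_i\Bigr\|_{TV}\leq \sum_{i=1}^d \|\rho_i-\sigma_i\|_{TV}.$$
Since $\cW_1$ is taken with respect to the $L^1$-metric \eqref{distance}, coupling each pair $(\rho_i,\sigma_i)$ optimally and independently yields
$$\cW_1\Bigl(\bigotimes_{i=1}^d \rho_i,\bigotimes_{i=1}^d \sigma_i\Bigr)\leq \sum_{i=1}^d \cW_1(\rho_i,\sigma_i).$$
Plugging the one-dimensional estimates into these bounds closes the argument.

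The only delicate point is to verify that the constant $C$ in Corollary \ref{corollary} can be chosen uniformly over the $d$ coordinates, so that it appears as a prefactor of the sum rather than being multiplied by $d$. Tracing the proof of Corollary \ref{corollary} back to Theorem \ref{thm}, that constant is built from $\gamma_i((0,+\infty))$, $\gamma_i(\eta_i^2)$, $\gamma_i(\eta_i)$ and from a second moment of $\gamma_i$; each of these quantities is controlled only through the lower bound $V_i''\geq \kappa$ and the resulting quadratic lower bound \eqref{minor} on $V_i$, so a common one-dimensional constant can indeed be extracted. Once this uniformity is in place, the tensorization bounds give the announced dimension-free estimate. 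I expect this uniform-bookkeeping on the one-dimensional constant to be the only tedious part of the proof; the factorization and the tensorization steps are essentially immediate.
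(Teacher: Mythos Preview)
Your approach is correct and matches the paper's: factorize the conditioned law as a product, apply Corollary \ref{corollary} in each coordinate, and combine via the subadditivity of $\|\cdot\|_{TV}$ and of $\cW_1$ (for the $L^1$-metric) on product measures. The paper's proof is actually terser than yours---it derives the TV subadditivity from the coupling identity $\tfrac{1}{2}\|\mu-\nu\|_{TV}=\inf_{(X,Y)\in\Pi(\mu,\nu)}\P(X\neq Y)$ and records that $\cW_1$ on products is in fact an \emph{equality}, but it does not discuss the one-dimensional constant at all; one small caveat in your write-up is that the claim that this constant is controlled by $\kappa$ alone is too optimistic, since the coefficients $a_1,a_2$ in \eqref{minor} carry the initial data $V_i(0),V_i'(0)$, so the honest dimension-free choice is simply $C=\max_{1\le i\le d} C_i$.
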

\begin{proof}
The first result comes from the inequalities
$$\| \mu_1 \otimes \cdots \otimes \mu_d - \nu_1 \otimes \cdots \otimes \nu_d \|_{TV} \leq \sum_{i=1}^d \| \mu_i - \nu_i\|_{TV},$$
which can be shown using the equality 
$$\frac{1}{2}\| \mu - \nu \|_{TV} = \inf_{(X,Y) \in \Pi(\mu,\nu)} \P(X \ne Y),~~~~\forall \mu, \nu \in \cM_1(D),$$
and the result is deduced from the one obtained for $d=1$.
In the same way, by the definition of $\cW_1$ and recalling that $\cW_1$ is defined through the $L^1$-distance defined in \eqref{distance}, one has
$$\cW_1(\mu_1 \otimes \cdots \otimes \mu_d, \nu_1 \otimes \cdots \otimes \nu_d) = \sum_{i=1}^d \cW_1(\mu_i, \nu_i),$$
which implies the second inequality in the statement of Theorem \ref{dernier}.  
\end{proof}
Obvioulsy, one can also state a result similar to Theorem \ref{thm-cdfi} for multi-dimensional diffusion processes coming down from infinity: 
\begin{theorem}
\label{cdfimulti}
Assume that, for any $i=1,\ldots,d$, $X~i$ comes down from infinity and $V_i'(x) > 0$ for any $x > 0$. Then the statements of Theorem \ref{thmintro} and \ref{dernier} hold replacing $\kappa$ by 
$$\tilde{\kappa} := \min_{i=1,\ldots,d} \inf_{x > 0} \left\{ V_i''(x) + 8 \lambda_{0,i} e^{-V_i(x)} + 2 \lambda_{0,i}^2 \left(\frac{1 - 4e^{-V_i(x)}}{V_i'(x)}\right)^2 \right\}.$$
\end{theorem}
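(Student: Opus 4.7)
The plan is to combine the one-dimensional sharpening of Theorem \ref{thm-cdfi} with the tensorization machinery already set up in Theorems \ref{thmintro} and \ref{dernier}. Since the potential is separable, $V(x) = \sum_{i=1}^d V_i(x_i)$, the discussion preceding Theorem \ref{thmintro} gives the product decomposition $\eta(x) = \prod_{i=1}^d \eta_i(x_i)$ of the positive eigenfunction, where each $\eta_i$ is the one-dimensional positive eigenfunction with eigenvalue $-\lambda_{0,i}$ and $\lambda_0 = \sum_i \lambda_{0,i}$.

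The first step is to rewrite $\Hess[V - 2\log(\eta)]$. By separability this matrix is diagonal with $i$-th diagonal entry equal to $[V_i - 2\log(\eta_i)]''(x_i)$. Since each $X^i$ comes down from infinity, the integral representation $\eta_i(x) = 4\lambda_{0,i}\int_0^\infty (x\wedge y)\eta_i(y)e^{-V_i(y)}\,dy$ from \eqref{rmk} is available, and the coordinate-wise version of the calculation carried out in the proof of Theorem \ref{thm-cdfi} (which is where the hypothesis $V_i'>0$ intervenes) yields
$$[V_i - 2\log(\eta_i)]''(x) = V_i''(x) + 8\lambda_{0,i}e^{-V_i(x)} + 8\lambda_{0,i}^2\left(\frac{1 - 2e^{-V_i(x)}}{V_i'(x)}\right)^2.$$
Taking the infimum in $x$ and the minimum over $i$, one has $\Hess[V - 2\log(\eta)] \geq \tilde{\kappa}\,\Id$, which is exactly assumption (BE2) of Theorem \ref{thm} with $\kappa$ replaced by $\tilde{\kappa}$.

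With this sharper Bakry-\'{E}mery bound in hand, I would then invoke Theorem \ref{thm} directly to obtain the total variation and $\cW_1$ estimates of Theorem \ref{thmintro} with $\tilde{\kappa}$ in place of $\kappa$; the integrability conditions $\gamma(\eta^2) < \infty$ and finiteness of $\int_D (1+d(x,x_0))^2 e^{-V(x)}dx$ follow coordinate-wise from convexity and coercivity of each $V_i$ exactly as in Corollary \ref{corollary}. For the dimension-free tensorized statement corresponding to Theorem \ref{dernier}, I would apply the one-dimensional Theorem \ref{thm-cdfi} to each independent component $X^i$ separately, getting a rate $\tilde{\kappa}_i$ for coordinate $i$, then combine via the product-measure inequalities
$$\|\mu_1\otimes\cdots\otimes\mu_d - \nu_1\otimes\cdots\otimes\nu_d\|_{TV} \leq \sum_{i=1}^d \|\mu_i - \nu_i\|_{TV}$$
and
$$\cW_1(\mu_1\otimes\cdots\otimes\mu_d, \nu_1\otimes\cdots\otimes\nu_d) = \sum_{i=1}^d \cW_1(\mu_i,\nu_i)$$
that already underpin the proof of Theorem \ref{dernier}. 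The joint rate $\tilde{\kappa} = \min_i \tilde{\kappa}_i$ with a constant $C$ independent of $d$ follows immediately.

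The main obstacle is little more than bookkeeping: one must confirm that the integral representation \eqref{rmk} can be invoked coordinate-wise under the product hypothesis on $V$, which is clear from the fact that $\eta$ factorizes into $\prod_i \eta_i$ and that each component process is a one-dimensional diffusion coming down from infinity. Once this is granted, no new analytic estimate is needed: the proof reduces entirely to applying Theorem \ref{thm-cdfi} in each coordinate and feeding the resulting diagonal curvature bound into Theorem \ref{thm}.
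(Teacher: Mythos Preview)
Your proposal is correct and follows precisely the route the paper has in mind: the paper gives no explicit proof of this theorem, introducing it with the word ``Obviously'' as a direct combination of the one-dimensional computation in Theorem~\ref{thm-cdfi} with the product structure exploited in Theorems~\ref{thmintro} and~\ref{dernier}. Your argument --- factorizing $\eta$, computing the diagonal Hessian entries via the formula from the proof of Theorem~\ref{thm-cdfi}, feeding the resulting curvature bound $\tilde\kappa$ into Theorem~\ref{thm}, and then tensorizing via the product-measure inequalities --- is exactly that combination. One small remark: when you justify the integrability conditions you appeal to ``convexity and coercivity of each $V_i$'', but convexity is not assumed here; what you actually need (and have) is that each $X^i$ comes down from infinity together with the bound $[V_i - 2\log\eta_i]'' \geq \tilde\kappa > 0$, which already forces the Gaussian-type decay of $\eta_i^2 e^{-V_i}$ and hence $\gamma(\eta^2) < \infty$, while coming down from infinity itself guarantees the moment condition on $\gamma$.
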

\textbf{Acknowledgement.} I am very grateful to the anonymous referee for his/her comments and questions, which allow me to better the paper. This research was supported by the
Swiss National Foundation grant 200020 196999. 

\section*{Appendix}

\subsection*{Spectral analysis for the Brownian motion in a hypercube}

This part is dedicated to the spectral analysis of the scaled Brownian motion $(X_t)_{t \geq 0} = (\sqrt{2}B_t)_{t \geq 0}$ living in the hypercube $C_N := (-N,N)^d$. 

For any $k_1, \ldots, k_n \in \N$, let $\eta_{k_1, \ldots, k_d}$ defined by
\begin{equation*}\eta_{k_1, \ldots, k_n}(x_1, \ldots, x_d) = \frac{1}{\sqrt{N^d}} \prod_{i=1}^d \sin\left(\frac{k_i \pi}{2N}(x_i + N)\right),~~~~\forall (x_1, \ldots, x_d) \in C_N,\end{equation*}
These functions are therefore eigenfunctions of the Laplacian $\Delta$: for all $k_1, \ldots, k_d \in \N$, for any $(x_1, \ldots, x_d) \in C_N$, 
\begin{align}
    \cL \eta_{k_1, \ldots, k_d}(x_1, \ldots, x_d) = \Delta \eta_{k_1, \ldots, k_d}(x_1, \ldots, x_d) &= \frac{1}{\sqrt{N^d}} \sum_{i=1}^d \d_{x_i,x_i} \prod_{i=1}^d \sin\left(\frac{k_i \pi}{2N}(x_i + N)\right) \notag \\
    &= - \frac{1}{\sqrt{N^d}} \sum_{i=1}^d \left(\frac{k_i \pi}{2N}\right)^2 \prod_{i=1}^d \sin\left(\frac{k_i \pi}{2N}(x_i + N)\right) \notag \\
    &= - \lambda_{k_1, \ldots, k_d} \eta_{k_1, \ldots, k_d}(x_1, \ldots, x_d),
    \label{ex2}
\end{align}
where 
$$\lambda_{k_1, \ldots, k_d} := \frac{\pi^2}{4 N^2} \sum_{i=1}^d k_i^2.$$
In particular, by the definition of the function $\eta_{Bm}$ in \eqref{eta}, $$\eta_{Bm} = \sqrt{N^d} \eta_{1, \ldots, 1}.$$ Hence, by \eqref{ex2},
$$\Delta \eta_{Bm} = - \lambda_0 \eta_{Bm},$$
where 
$$\lambda_0 := \lambda_{1,\ldots,1} = \frac{d \pi^2}{4 N^2}.$$
Hence, $\eta_{Bm}$ is indeed an eigenfunction of $\Delta$, as claimed in Subsection \ref{ex}. 

It remains us to compute the Poincaré constant $C_P$. 
The family $(\eta_{k_1, \ldots, k_d})_{k_1, \ldots, k_d}$ is a total orthonormal basis of $\L^2(\gamma)$ (recalling that $\gamma$ is Lebesgue's measure in our case) and are eigenvectors of $\Delta$, by \eqref{ex2}. Then $(\eta_{k_1, \ldots, k_d})_{k_1, \ldots, k_d}$ are also eigenvectors of $P_t$, for all $t$, respectively associated to the eigenvalues $(e^{-\lambda_{k_1, \ldots, k_d} t})_{k_1, \ldots, k_d}$.
Then, defining for any $k_1, \ldots, k_d \in \N$
$$\tilde{\eta}_{k_1, \ldots, k_d} = \frac{\eta_{k_1, \ldots, k_d}}{\eta_{Bm}},$$
one obtains, for any $t \geq 0$,
$$\tilde{P}_t \tilde{\eta}_{k_1, \ldots, k_d} = \frac{e^{\lambda_0 t}}{\eta_{Bm}} P_t[\eta_{k_1, \ldots, k_d}] = e^{-\left(\lambda_{k_1, \ldots, k_d} - \lambda_0\right) t} \tilde{\eta}_{k_1, \ldots, k_d}.$$
Since the family $(\eta_{k_1, \ldots, k_d})_{k_1, \ldots, k_d}$ is orthonormal with respect to Lebesgue's measure, the family $(\tilde{\eta}_{k_1, \ldots, k_d})_{k_1, \ldots, k_d}$ is orthogonal with respect to the measure $\beta_{Bm} = \eta_{Bm} \circ \alpha_{Bm}$. Thus, the family $(\frac{\tilde{\eta}_{k_1, \ldots, k_d}}{\|\tilde{\eta}_{k_1, \ldots, k_d}\|_{\L^2(\beta_{Bm})}})_{k_1, \ldots, k_d}$ is a total orthonormal basis of $\L^2(\beta_{Bm})$ and one obtains by \eqref{trou-spectral} that
$$1/C_P = \lambda_1 - \lambda_0,$$
where $\lambda_1$ is the smallest $\lambda_{k_1, \ldots, k_d}$ different from $\lambda_0$. In other words,
$$\lambda_1 = \lambda_{1, \ldots, 1, 2} = \frac{(d-1)\pi^2}{4N^2} + \frac{\pi^2}{N^2},$$ so that
$$\frac{1}{C_P} = \lambda_1 - \lambda_0 = \frac{3 \pi^2}{4 N^2} > \kappa,$$
which is exactly the claim stated in Subsection \ref{ex}.

\subsection*{Spectral analysis of the Ornstein-Uhlenbeck process living in $(0,+\infty)^d$}

Let us consider the $d$-dimensional process $(X_t)_{t \geq 0}$ defined by 
$$dX_t = \sqrt{2} dB_t - \lambda X_t dt,$$
with $\lambda > 0$, living on $(0,\infty)^d$ and absorbed at its boundary. First, we prove that the function $\eta_{OU}$ defined by 
$$\eta_{OU}(x_1, \ldots, x_d) = \prod_{i=1}^d x_i$$
is a right eigenfunction for the operator $L = \Delta - \lambda x \cdot \nabla$, positive on $(0,+\infty)^d$ and vanishing at its boundary. As a matter of fact,
\begin{align*}L \eta_{OU}(x) = \Delta \eta_{OU}(x) - \lambda x \cdot \nabla \eta_{OU}(x)& = - \lambda \sum_{i=1}^d x_i \d_{x_i} \eta_{OU}(x_1, \ldots, x_d) \\
&= - \lambda d \prod_{j=1}^d x_j = - \lambda d \eta_{OU}(x_1, \ldots, x_d),\end{align*}
which proves one of the claim of Subsection \ref{ex}.

It remains us to prove the formula of representation \eqref{representation} for the density function $f_x$. Before proving this equality, let us recall some facts on Hermite polynomials:
\begin{definition}
Hermite polynomials $(H_n)_{n \in \Z_+}$ are defined as follows: for any $n \in \Z_+$, for any $x \in \R$,
$$H_n(x) = (-1)^n e^{\frac{x^2}{2}} \frac{d^n}{d^nx} e^{-\frac{x^2}{2}}.$$
\end{definition}
Then two interesting properties can be deduced from this definition:
\begin{itemize}
    \item For any $n,m \in \Z_+$,
    \begin{equation} \label{first}\int_\R H_n(x) H_m(x) \frac{1}{\sqrt{2\pi}} e^{-\frac{x^2}{2}}dx = n! \delta_{n,m},\end{equation}
    where $\delta_{n,m}$ is the Kronecker delta. 
    \item For any $n \in \Z_+$ and $x \in \R$,
    \begin{equation}\label{second}H_n''(x) - x H_n'(x) = - n H_n(x).\end{equation}
\end{itemize}
In particular, by the first property, denoting $\gamma_0(dx) = \frac{1}{\sqrt{2\pi}}e^{-\frac{x^2}{2}}$, the family $(H_n/\sqrt{n!})_{n \in \Z_+}$ is an orthonormal basis of $\L^2(\gamma_0)$. 
Consider now the family $(F_n)_{n \in \Z_+}$ defined by 
\begin{equation}\label{fn}F_n(x) = \frac{H_n(\sqrt{\lambda}x)}{\sqrt{n!}},~~~~\forall n \in \N, \forall x \in (0,+\infty).\end{equation}
Then, by \eqref{first}, 
\begin{equation} \label{first-fn}\int_\R F_n(x) F_m(x) \sqrt{\frac{\lambda}{2 \pi}} e^{-\lambda \frac{x^2}{2}}dx = \int_\R \frac{H_n(\sqrt{\lambda}x)}{\sqrt{n!}} \frac{H_m(\sqrt{\lambda}x)}{\sqrt{m!}} \frac{1}{\sqrt{2 \pi}} e^{-\frac{(\sqrt{\lambda} x)^2}{2}} \sqrt{\lambda} dx = \delta_{n,m}.\end{equation}
Now, by \eqref{second},
\begin{align}
    F_n''(x) - \lambda x F_n'(x) &= \frac{\lambda H''_n(\sqrt{\lambda}x) - \lambda \times \sqrt{\lambda} x H'_n(\sqrt{\lambda}x)}{\sqrt{n!}} \notag \\
    &= - \lambda n F_n(x). \label{second-fn}
\end{align}
Hence, by \eqref{first-fn}, the family $(F_n)_{n \in \Z_+}$ is an orthonormal basis of $\L^2(\tilde{\gamma})$, where 
$$\tilde{\gamma}(dx) := \sqrt{\frac{\lambda}{2 \pi}} e^{-\frac{\lambda x^2}{2}}dx = \sqrt{\frac{\lambda}{2 \pi}} \gamma(dx).$$
Furthermore, by \eqref{second-fn}, for any $n \in \N$, $F_n$ is a right eigenfunction for the Ornstein-Uhlenbeck semi-group $(S_t)_{t \geq 0}$, defined by 
$$S_tf(x) = \E\left[f\left(xe^{-\lambda t} + \sqrt{\frac{1-e^{-2 \lambda t}}{\lambda}} Z\right)\right],~~~~\forall f \in \cB(\R), \forall x \in \R,$$
with $Z$ be a standard Gaussian variable. More precisely, for any $n \in \Z_+$, for any $t \geq 0$,
$$S_t F_n(x) = e^{-\lambda n t} F_n(x),~~~~\forall x \in \R.$$
Now, in order to prove the claim in Subsection \ref{ex}, let us prove the following proposition:

\begin{proposition}Let $d = 1$. For any $A \subset D$ and $x \in D$,
$$P_1 \1_A(x) = \sum_{n \text{ odd}} e^{-\lambda n} \left(\int_A F_n(y) \sqrt{\frac{2 \lambda}{\pi}}e^{-\frac{\lambda y^2}{2}}dy\right) F_n(x),$$
where $(F_n)_{n \in \Z_+}$ is defined as in \eqref{fn}. In particular, for any $x \in D$, $\delta_x P_1$ admits a density function with respect to the measure $\sqrt{\frac{2 \lambda}{\pi}} e^{\frac{-\lambda y^2}{2}}dy$, denoted by $f_x$, whose a representation is 
\begin{equation} \label{new-representation}f_x = \sum_{n \text{ odd}} e^{- \lambda n} F_n(x) F_n.\end{equation}
\end{proposition}
\begin{proof}
Let $A \subset D$. In this proof, let us consider the process $(X_t)_{t \geq 0}$ as a non-absorbed process, so as an Ornstein-Uhlenbeck process living on $\R$ following 
$$dX_t = \sqrt{2}dB_t - \lambda X_t dt.$$
Since the Ornstein-Uhlenbeck process satisfies a property of reflection at $0$, one has, for any $x \in D$, 
\begin{equation} \label{reflection}P_1 \1_A(x) = \P_x[X_1 \in A, \tau_\d > 1] = \P_x[X_1 \in A] - \P_x[X_1 \in A, \tau_\d \leq 1] = \P_x[X_1 \in A] - \P_x[X_1 \in - A],\end{equation}
where $-A := \{x \in \R : -x \in A\}$. Another way to write \eqref{reflection} is 
$$P_1\1_A(x) = \E_x[(\1_A - \1_{-A})(X_1)].$$
Since $(F_n)_{n \in \Z_+}$ is a total orthonormal basis of $\L^2(\tilde{\gamma})$ which are eigenfunctions for the operator $S_1$, respectively associated to the eigenvalues $e^{-\lambda n}$, then for any $f \in \L^2(\tilde{\gamma})$,
\begin{equation} \label{conclusion}\E_x[f(X_1)] = S_1 f(x) = \sum_{n \in \Z_+} e^{- \lambda n} <F_n, f>_{\tilde{\gamma}} F_n(x),\end{equation}
where $<F_n,f>_{\tilde{\gamma}} = \int_\R F_n(y) f(y) \frac{\lambda}{\sqrt{2\pi}}e^{-\frac{\lambda y^2}{2}}dy$. 
Noting that the function $\1_A - \1_{-A}$ is odd and that $F_n$ is odd (respectively even) when $n$ is odd (respectively even), one has
$$<F_n, \1_A - \1_{-A}>_{\tilde{\gamma}} = \left\{\begin{array}{cc}
    0 & \text{ if } n \text{ is even.} \\
    \int_A F_n(y) \sqrt{\frac{2\lambda}{\pi}}e^{-\frac{\lambda y^2}{2}}dy & \text{ otherwise.} 
\end{array}\right.$$
In conclusion, using \eqref{reflection} and \eqref{conclusion},
$$P_1 \1_A(x) = \sum_{n \text{ odd}} e^{-\lambda n} \int_A F_n(y) \sqrt{\frac{2\lambda}{\pi}}e^{-\frac{\lambda y^2}{2}}dy F_n(x).$$
The representation \eqref{new-representation} is naturally deduced from the previous equality. 
\end{proof}

\bibliographystyle{plain}
\bibliography{biblio}
\end{document}